\newcommand{\langue}{anglais}	% langue de rédaction de l'article
	\newcommand{\theoremenom}{Théorème}
	\newcommand{\propositionnom}{Proposition}
	\newcommand{\lemmenom}{Lemme}
	\newcommand{\corollairenom}{Corollaire}
	\newcommand{\definitionnom}{Définition}
	\newcommand{\remarquenom}{Remarque}
	\newcommand{\exemplenom}{Exemple}
	\newcommand{\conjecturenom}{Conjecture}
	\newcommand{\theoremenom}{Theorem}
	\newcommand{\propositionnom}{Proposition}
	\newcommand{\lemmenom}{Lemma}
	\newcommand{\corollairenom}{Corollary}
	\newcommand{\definitionnom}{Definition}
	\newcommand{\remarquenom}{Remark}
	\newcommand{\exemplenom}{Example}
	\newcommand{\conjecturenom}{Conjecture}
\newtheorem{theoreme}{\theoremenom}[section]
\newtheorem{proposition}[theoreme]{\propositionnom}
\newtheorem{lemme}[theoreme]{\lemmenom}
\newtheorem{corollaire}[theoreme]{\corollairenom}
\newtheorem{definition}[theoreme]{\definitionnom}
\newtheorem{remarque}[theoreme]{\remarquenom}
\newtheorem*{conjecture}{\conjecturenom}
\def\cleartheorem#1{%
    \expandafter\let\csname#1\endcsname\relax
    \expandafter\let\csname c@#1\endcsname\relax
}
\newcommand{\compteurThm}{1}
\newcounter{annexe}
\newcommand{\R}{\mathbb{R}}
\newcommand{\N}{\mathbb{N}}
\newcommand{\dd}{\mathcal{A}}
\newcommand{\eqskip}{ \vspace*{2mm}\\ }
\DeclareMathOperator{\dist}{dist}
\DeclareMathOperator{\dv}{div}
\begin{document}

\pagestyle{empty} %No headings for the first pages.

%% Title Page %%%%%%%%%%%%%%%%%%%%%%%%%%%%%%%%%%%%%%%%%%%%%%%
%% ==> Write your text here or include other files.

%% The simple version:
\title{Payne's nodal line conjecture fails on doubly-connected planar domains}

\author[P. Freitas]{Pedro Freitas\orcidlink{0000-0002-2007-5259}}
\author[R. Leylekian]{Roméo Leylekian\orcidlink{0000-0002-9277-0437}}
\address{Grupo de F\'{\i}sica Matem\'{a}tica, Instituto Superior T\'{e}cnico, Universidade de Lisboa, Av. Rovisco Pais, 1049-001 Lisboa, Portugal}
\email{pedrodefreitas@tecnico.ulisboa.pt, romeo.leylekian@tecnico.ulisboa.pt}

%\address{Grupo de F\'{i}sica Matem\'{a}tica, Instituto Superior Técnico, Universidade de Lisboa, Av. Rovisco Pais, 1049-001 Lisboa, Portugal}
%\date{} %%If commented, the current date is used.

%% The nice version:
%\input{page_de_garde} %%You need a file 'page_de_garde.tex' for this.
%% ==> TeXnicCenter supplies a possible titlepage file
%% ==> with its templates (File | New from Template...).

%% Abstract
\begin{abstract}
%\input{parties/abstract} We show that there exists 
% We present examples of bounded planar domains with one single hole for which the nodal line of a second Dirichlet eigenfunction is
% closed and does not touch the boundary. This answers a question raised 30 years ago by M. \& T. Hoffmann-Ostenhof and N. Nadirashvili,
% pertaining to Payne's nodal line Conjecture. In the paper where they exhibited the first known counterexample to Payne's Conjecture,
% with an unspecified number of holes, they wondered about the smallest number of holes that a counterexample could have. They stated
% that this number is unlikely to be zero. The best known domain so far has six holes and is a variation on the original counterexample.
% In the present paper we bring the number of holes down to one by means of a new family of domains. This is expected to be optimal.
% In other words Payne's Conjecture can at most hold for simply-connected domains.

We present examples of bounded planar domains with one single hole for which the nodal line of a second Dirichlet eigenfunction is closed
and does not touch the boundary. This shows that Payne’s nodal line conjecture can at most hold for simply-connected domains in the plane.

% {\ccr This is related to Payne's conjecture, claiming that the nodal line cannot remain closed inside the domain. Since the famous result of M. 
% Hoffmann-Ostenhof, T. Hoffmann-Ostenhof, and N. Nadirashvili, it is widely believed that the conjecture holds on simply connected domains. Our 
% result shows that one cannot expect more than that.}
\end{abstract}

\maketitle

%\thispagestyle{empty}
%\clearpage

%% Inhaltsverzeichnis %%%%%%%%%%%%%%%%%%%%%%%%%%%%%%%%%%%%%%%
%\tableofcontents %Table of contents
%\cleardoublepage %The first chapter should start on an odd page.

\pagestyle{plain} %Now display headings: headings / fancy / ...

%% Chapters %%%%%%%%%%%%%%%%%%%%%%%%%%%%%%%%%%%%%%%%%%%%%%%%%
%% ==> Write your text here or include other files.

\section{Introduction}

Consider the eigenvalue problem for the Dirichlet Laplacian on a bounded connected domain $\Omega$ in $\R^n$, namely,
\begin{equation}\label{eigprob}
\left\{
\begin{array}{ll}
  \Delta u + \lambda u = 0, & x\in\Omega\eqskip
  u=0, & x\in\partial\Omega,
\end{array}
\right.
\end{equation}
with eigenvalues $\lambda_{1}<\lambda_{2}\leq\lambda_{3} \leq \dots $ and corresponding eigenfunctions $u_{1}, u_{2}, u_{3}\dots$.
While the first eigenfunction $u_{1}$ may be chosen to be of one sign in $\Omega$, due to orthogonality conditions and Courant's nodal 
domain theorem, any second eigenfunction $u_{2}$ will have two and only two nodal domains, defined as maximal connected open sets in
$\Omega$ where an eigenfunction does not change sign. More precisely, the nodal set of a function $u:\Omega\to\R$ is defined by
\[
 \mathcal{N}(u) = \overline{\left\{ x\in\Omega: u(x)=0\right\}},
\]
and the nodal domains of $u$ are the connected components into which the domain $\Omega$ is divided by $\mathcal{N}(u)$.

The structure of the nodal set and nodal domains of the Laplacian has been the object of much study, with the
case associated to the second eigenfunction being of particular interest. This had at its origin Payne's nodal line conjecture
concerning the topology of $\mathcal{N}(u_{2})$, and was first formulated for planar domains in~\cite{Payne} as
\begin{conjecture}[Nodal line conjecture 1967]
 The eigenfunction $u_{2}$ of Problem~\eqref{eigprob} cannot have a closed nodal line for any domain $\Omega\subset\R^{2}$.
\end{conjecture}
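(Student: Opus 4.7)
Since the title and abstract announce the \emph{failure} of Payne's conjecture on doubly-connected planar domains, the task is to propose a strategy for constructing explicit counterexamples---bounded doubly-connected $\Omega\subset\R^2$ whose second Dirichlet eigenfunction $u_2$ has a closed nodal line strictly inside $\Omega$. My plan combines a discrete rotational symmetry constraint with a quantitative spectral gap inequality.

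\textbf{Plan.} Choose an integer $k\geq 3$ and construct a doubly-connected domain $\Omega$ invariant under the cyclic group $\mathbb{Z}_k$ acting by rotations about the centre of the hole. The Dirichlet spectrum decomposes according to the irreducible representations of $\mathbb{Z}_k$: write $\mu_1\leq\mu_2\leq\cdots$ for the eigenvalues of $\mathbb{Z}_k$-invariant eigenfunctions (trivial representation) and $\nu_1\leq\nu_2\leq\cdots$ for the lowest eigenvalues arising in the non-trivial representations. Since $u_1>0$ is automatically $\mathbb{Z}_k$-invariant, $\lambda_1(\Omega)=\mu_1$. The design goal is to arrange $\mu_2<\nu_1$, so that $\lambda_2(\Omega)=\mu_2$ and the corresponding $u_2$ can be taken $\mathbb{Z}_k$-invariant.

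\textbf{Topological step.} Once $u_2$ is $\mathbb{Z}_k$-invariant, so is its nodal set $\mathcal{N}(u_2)$. By Courant's theorem $u_2$ has exactly two nodal domains. A $\mathbb{Z}_k$-invariant subset of a doubly-connected $\Omega$ (with $k\geq 3$) that separates it into precisely two components cannot be a union of $k$ boundary-to-boundary arcs (which would produce $k$ components), nor a $\mathbb{Z}_k$-orbit of several distinct closed curves (which would again give more than two). It must therefore reduce to a single $\mathbb{Z}_k$-invariant closed curve, and this curve must encircle the hole, since a null-homotopic invariant closed curve would bound a $\mathbb{Z}_k$-invariant topological disk in $\Omega$ containing the centre of rotation---but that centre lies in the hole by construction. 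Together with standard regularity of $\mathcal{N}(u_2)$ at its non-singular zeros, this delivers the desired closed nodal line of $u_2$ interior to $\Omega$.

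\textbf{Main obstacle.} The genuinely delicate step is the spectral comparison $\mu_2<\nu_1$. On a concentric $\mathbb{Z}_k$-symmetric annulus this inequality appears to fail at every $k$: the lowest angular ($n=1$) Bessel mode systematically lies below the first non-ground radial mode, so the construction has to leave the concentric annulus class in an essential way. I would therefore introduce genuine $\mathbb{Z}_k$-symmetric deformations---narrow throats between $k$ lobes, inward bumps on the outer boundary, outward bumps on the inner boundary---tuned to raise the non-trivial-representation modes while keeping the second trivial mode low, and then verify the gap inequality by combining an explicit $\mathbb{Z}_k$-invariant test function bounding $\mu_2$ from above with a lower bound for $\nu_1$ (e.g.\ via domain monotonicity or Poincaré-type inequalities on a fundamental domain of the $\mathbb{Z}_k$-action). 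Standard perturbative regimes---thin-bridge limits, small-hole limits, weak-coupling tight-binding---all drive $u_2$ back into a non-trivial representation, so this step seems to require an essentially non-perturbative geometric input, and that is where I expect the bulk of the technical work to lie.
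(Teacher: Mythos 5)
Your proposal is a plan rather than a proof: the decisive step, the spectral gap $\mu_2<\nu_1$ between the second symmetric mode and the lowest antisymmetric mode, is exactly the point you leave open, and you yourself note that it fails for every $k$ on concentric annuli. Without producing a concrete doubly-connected, $\mathbb{Z}_k$-symmetric domain for which this inequality is verified, no counterexample has been constructed. This is not a minor omission. The known constructions in this spirit (Hoffmann-Ostenhof--Hoffmann-Ostenhof--Nadirashvili's slitted disk, Fournais's higher-dimensional version, and the computer-assisted example of Dahne, G\'omez-Serrano and Hou) all hinge on precisely such quantitative eigenvalue comparisons, and they only succeed with many slits or six holes; it is far from clear that the inequality can be forced at all with a single hole, and the paper's actual counterexample is \emph{not} rotationally symmetric, only reflection-symmetric about the $x$-axis. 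Your topological step is also slightly loose (you should rule out singular points of the nodal set and nodal arcs meeting the boundary; for a second eigenfunction this follows from Courant's theorem plus the local structure of nodal sets, as the paper recalls), but that part is repairable.

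The paper takes a genuinely different route that avoids any quantitative spectral estimate. It starts from a long thin smoothed rectangle, whose second eigenfunction has a vertical nodal segment, attaches a very thin annular handle to create a single hole, and slides the handle horizontally. A symmetry-plus-topology lemma shows the nodal line can meet at most one of the two boundary components at a time; a lower semi-continuity argument for the nodal set under uniform convergence of the (pulled-back) eigenfunctions then shows that the set of parameters where it meets the ``inner'' component and the set where it meets the ``outer'' component are both closed, contain the two endpoints of the sliding interval respectively, and hence cannot partition the connected parameter interval. At some intermediate parameter the nodal line touches neither component and is therefore closed. The only analytic inputs are Mosco convergence and local elliptic estimates in the thin-handle limit, which replace the hard non-perturbative gap inequality your plan would require. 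If you want to salvage your approach, you would need to supply an explicit $\mathbb{Z}_k$-symmetric geometry together with rigorous two-sided eigenvalue bounds; as it stands, the proposal does not establish the result.
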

\noindent An $n-$dimensional version was proposed by Yau in 1993~\cite[Problem 45]{yau93}, who also questioned how 
Euclidean results would extend to compact manifolds with boundary.

The original conjecture has been shown to hold under certain restrictions, namely by Payne when $\Omega$ is convex along one
direction and invariant by reflection with respect to an axis orthogonal to that direction~\cite{Payne2}, and by Melas for sufficiently smooth
convex planar domains~\cite{Melas}. The smoothness assumption in Melas's paper was then removed in~\cite{alessandrini}. Damascelli generalised Payne's result to higher dimensions~\cite{d00}, and Jerison proved
the conjecture in the case of long and thin convex domains~\cite{jerison}, while  Krej\v{c}i\v{r}\'{\i}k and the first author proved
it for long thin tubes of arbitrary cross-section around a bounded line in Euclidean space~\cite{fk08}. In these last two cases,
the problem of locating the nodal set was also considered. More recently, Kiwan addressed the conjecture for a class of doubly-connected domains~\cite{kiwan}, while Mukherjee and Saha proved it for dumbbell-like domains in any dimension~\cite[Theorem~1.8]{mukherjee-saha}. Let us also mention that, in the preprint~\cite{ha24}, the author claims that the conjecture holds for all regular simply connected domains. However, the proof relies on a result in~\cite{linqun} according to which the multiplicity of the second eigenvalue on planar domains is at most two.
As recalled in~\cite{H2ON-2,helffer-hoffmann-ostenhof-jauberteau-lena} this need not be the case in general.

On the negative side, several counterexamples have been provided, either for the original problem or for related problems under
different  settings. In the former category, the first known counterexample was given in~\cite{H2ON} (see also \cite{H2ON-2}) and consists of a disk from
which a (large) number of slits located on a smaller concentric disk have been removed.
Later Fournais generalised this to higher dimensions giving a domain where the nodal set does not touch the boundary,
while providing a first estimate on the order of the number of holes, namely, $10^9$, which was considered to be {\it probably
by far too large}~\cite{fournais01}. In fact, Kennedy showed that in dimensions three or higher it was possible to obtain a 
counterexample homeomorphic to a ball~\cite{kennedy} (see also~\cite[Theorem~1.9]{mukherjee-saha}).

As for what may happen under different restrictions, a first question that may be asked is what happens if one drops the
assumption that the domain is bounded. In this case, a counterexample relevant to the present paper -- see the discussion
below -- was given by Krej\v{c}i\v{r}\'{\i}k and the first author in~\cite{freitas-krejcirik07}, where it was shown that
there exist unbounded domains for which the nodal line does not touch the boundary. These are however not closed and in
fact consist of an infinite straight line. On the other hand, these domains satisfy Payne's conditions mentioned above for
which he proved the conjecture for bounded domains~\cite{Payne2}. In fact, the counterexample given in~\cite{freitas-krejcirik07}
may even be chosen to be symmetric and convex with respect to both directions.

Moving now away from the Euclidean Laplacian, Lin and Ni showed that in the presence of a potential the conjecture can
fail even for a ball and when the potential is radial, meaning that the nodal set can be an $(n-1)-$sphere~\cite{LinNi}.
Answering the question raised by Yau mentioned above, the first author of the present paper showed that the Laplacian could
also have a nodal set consisting of a geodesic disc on surfaces of revolution~\cite{Freitas02}. These two examples showed that
outside the Laplacian in the Euclidean setting, simply-connectedness and convexity of the domain do not necessarilly imply the 
conjecture. We note that what may happen for the Laplacian in general convex domains in $\R^{n}$ with $n$ larger than two
remains mostly unknown.

All of this leaves open the question of what is the minimal number of holes which a planar counterexample can
have. This had already been formulated in~\cite[Remark~3]{H2ON}, where the authors stated that while they did not believe that it
would be possible to find a planar simply-connected domain for which the conjecture would fail, a more general question would be:

\medskip

{\noindent\it What is the smallest $N_{0}$ such that there exists a domain with $N_{0}$ boundary
components whose second eigenfunction has a nodal line that does not hit the boundary?}

\medskip

In a recent paper, Dahne, Gómez-Serrano,
and Hou, have shown via a computer-assisted proof that it is possible to reduce the number of holes to six (and hence $N_{0}$ above to seven)~\cite{dgh21}. The main purpose
of the present paper is to show through purely analytic arguments that $N_{0}$ may be brought down to two, that is, there exists a planar
domain with a single hole whose nodal line is closed and does not touch the boundary. In other words, Payne's conjecture in
the plane can be expected to hold at most for simply-connected domains.

\begin{theoreme}\label{thm:resultat principal}
There exists a doubly-connected bounded open subset of $\R^2$ with a second eigenfunction whose nodal line is closed and does not touch the boundary.
\end{theoreme}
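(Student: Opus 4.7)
I would adapt the unbounded counterexample of \cite{freitas-krejcirik07} to a bounded, doubly-connected setting by wrapping a long portion of the unbounded strip into an annular region. In \cite{freitas-krejcirik07} a doubly-convex, symmetric, unbounded strip-like domain $\Omega_0 \subset \R^2$ is constructed whose Dirichlet Laplacian admits, below the essential-spectrum threshold, both a $y$-symmetric ground state $\psi_0^{\mathrm s}$ and a $y$-antisymmetric second eigenfunction $\psi_0^{\mathrm a}$; the nodal set of $\psi_0^{\mathrm a}$ is the straight line $\{y=0\}$, entirely interior to $\Omega_0$. Both bound states decay exponentially as $|x|\to\infty$.

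For $R$ large I would introduce the bent domain
\begin{equation*}
\Omega_R := \Phi_R(\widetilde\Omega_R), \qquad \Phi_R(x,y):=\bigl((R+y)\cos(x/R),\,(R+y)\sin(x/R)\bigr),
\end{equation*}
where $\widetilde\Omega_R$ is a truncation of $\Omega_0$ to $|x|<\pi R$, modified smoothly so that its profile is $2\pi R$-periodic. For $R$ larger than the supremum of the profile, $\Phi_R$ is a diffeomorphism onto a bounded, smooth, doubly-connected subset of $\R^2$, and the line $\{y=0\}$ is mapped onto the interior circle $\{r=R\}$. Pulling the eigenvalue problem back to $\widetilde\Omega_R$ yields the Laplace--Beltrami operator of the metric $g=dy^2+(1+y/R)^2 dx^2$, a $O(1/R)$ perturbation of the flat Laplacian. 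The exponential decay of $\psi_0^{\mathrm s}, \psi_0^{\mathrm a}$ makes the truncation error negligible, and norm-resolvent perturbation theory handles the geometric perturbation, giving as $R\to\infty$ convergence of the first two Dirichlet eigenvalues of $\Omega_R$ to $\lambda_1(\Omega_0),\lambda_2(\Omega_0)$ and of their eigenfunctions (in, say, $W^{1,2}$ and $L^\infty$) to the transplants of $\psi_0^{\mathrm s}, \psi_0^{\mathrm a}$.

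A key structural point is that the bent geometry creates new ``angular'' modes of the form $\cos(n\theta)$ times the transverse ground profile, whose eigenvalues accumulate from above to the essential-spectrum threshold of $\Omega_0$ at a rate $O(n^2/R^2)$. One must therefore ensure that $\lambda_2(\Omega_0)$ lies strictly below this threshold, which in \cite{freitas-krejcirik07} can be arranged by making the widening of $\Omega_0$ sufficiently pronounced, so that the antisymmetric bound state sits below the symmetric essential-spectrum bottom. Under this condition, for $R$ large enough no angular mode interferes with the identification of the second eigenvalue, and the second eigenfunction $\psi_R$ of $\Omega_R$ is close to $\psi_0^{\mathrm a}\circ\Phi_R^{-1}$, whose nodal set is $\{r=R\}$.

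\emph{Main obstacle.} The bending destroys the exact $y\mapsto -y$ symmetry of $\Omega_0$: the corresponding involution $r\mapsto 2R-r$ of $\Omega_R$ is not a Euclidean isometry, so $\psi_R$ is not forced by any symmetry into a fixed parity class and its nodal set need not lie on $\{r=R\}$. The technical heart of the proof is therefore a quantitative nodal-stability statement -- norm convergence $\psi_R \to \psi_0^{\mathrm a}$ combined with a uniform lower bound on $|\nabla\psi_0^{\mathrm a}|$ along its nodal line should force the nodal set of $\psi_R$ to be a single simple closed curve in a thin tubular neighborhood of $\{r=R\}$, disjoint from $\partial\Omega_R$ -- which yields Theorem~\ref{thm:resultat principal}.
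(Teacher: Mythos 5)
Your geometric idea --- wrapping the two semi-infinite strips of the counterexample of \cite{freitas-krejcirik07} into a thin annular handle attached to the bulge --- produces essentially the same type of domain as the paper's (a convex set with a thin annulus-like appendix), but the analytic step you yourself flag as the ``main obstacle'' is exactly where the argument breaks, and the tool you propose for it is unavailable. The nodal line of $\psi_0^{\mathrm a}$ is unbounded and runs along the two strips, where $\psi_0^{\mathrm a}$ and $\nabla\psi_0^{\mathrm a}$ decay exponentially; there is therefore \emph{no} uniform lower bound on $|\nabla\psi_0^{\mathrm a}|$ along the nodal line. Quantitatively, norm convergence gives $|\psi_R-\psi_0^{\mathrm a}\circ\Phi_R^{-1}|=O(1/R)$ at best, whereas in the part of the handle at angular distance $\gg \log R/R$ from the bulge the limit function is $O(e^{-cR})$: the perturbation swamps the eigenfunction precisely in the region where the nodal line has to close up, and its sign there is uncontrolled. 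A transverse-mode expansion in the thin handle of width $\delta$ shows this is not a technicality: the bending breaks the $y\mapsto-y$ symmetry at order $1/R$ and seeds the symmetric transverse mode, which decays along the handle at rate $\sqrt{\pi^2/\delta^2-\lambda}$, strictly slower than the rate $\sqrt{4\pi^2/\delta^2-\lambda}$ of the antisymmetric mode carrying the nodal line. Hence at distance of order $\log R$ from the bulge the symmetric mode generically dominates, $\psi_R$ acquires a fixed sign across the cross-section, and the nodal line is pushed out to the inner or outer edge of the handle, producing two impact points on one boundary component rather than a closed curve. Closing up is a codimension-one event, not something valid for all (or even generic) large $R$.

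This is precisely the difficulty the paper is built to avoid: it never attempts to locate the nodal line of a fixed domain. Instead it slides the handle along a one-parameter family and proves only (i) a dichotomy --- the impact points lie all on the inner or all on the outer boundary component (Lemma~\ref{lemme:E recouvrent T}) --- and (ii) lower semicontinuity of $t\mapsto\dist(\mathcal{N}(v_t|_{\dd}),\partial B_k)$ (Lemma~\ref{lemme:continuité ligne nodale}); the two parameter sets where the nodal line touches the inner, resp.\ outer, component are then closed, nonempty and disjoint, so they cannot cover the connected parameter interval, and at some intermediate parameter the nodal line touches neither. Your construction could plausibly be repaired in the same spirit --- introduce a tuning parameter (slide the bulge along the wrapped handle, or move the handle along the bulge) and run such a connectedness argument --- but the direct perturbation claim for fixed large $R$, as stated, does not yield the theorem.
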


Of course, as a consequence of Theorem~\ref{thm:resultat principal} there are domains with more holes for which the nodal line remains closed. This can be seen by introducing
sufficiently small holes inside the nodal domains of the eigenfunction of the theorem (see also~\cite[Section~2]{mukherjee-saha}).

\begin{corollaire}\label{corollaire:plus de trous}
For any integer $N\geq 2$, there exists a bounded connected open subset of $\R^2$ whose boundary is made of $N$ connected components, and with a second eigenfunction whose nodal line is closed and does not touch the boundary.
\end{corollaire}

\begin{remarque}
By construction, the domain of Theorem~\ref{thm:resultat principal} (and that of Corollary~\ref{corollaire:plus de trous}) can be assumed to have further properties. For instance, one can take it to be smooth, symmetric, and with a simple second eigenvalue. We also point out that, through small deformations, one shall construct a continuum of domains, including non-symmetric ones, whose second eigenfunction still has a closed nodal line.
\end{remarque}

Let us now explain the ideas behind the proof of Theorem~\ref{thm:resultat principal}. While most, if not all, of the counterexamples to the original conjecture which have appeared in the literature so far have at
their origin that in~\cite{H2ON}, the present example is inspired by that in~\cite{freitas-krejcirik07}. 
This is basically obtained by starting from a long and thin convex domain and appending two thinning bisecting semi-infinite
vertical strips to the parts of the boundary with smallest curvature -- see~\cite{freitas-krejcirik07} for details. In this setting,
it is proved  that when the strips are suffciently narrow the nodal line coincides with the vertical axis bisecting the domain, and
hence never touches the boundary. However, the resulting  domain is unbounded and the nodal line is not closed.

The main idea we shall develop here to produce a bounded counterexample with a closed nodal line is to attach a thin
annulus-like appendix to the original set instead of the two semi-infinite strips, so that the resulting domain
remains bounded and symmetric with respect to the horizontal axis -- see Figure~\ref{fig:rectangle}, where we have represented
schematically possible different configurations for domains of this type, together with their nodal line. Note that as in the figure, such a domain may be the difference of two sets that are symmetric with respect to the horizontal axis and convex in the horizontal direction, a setting which resembles (yet differs
from) that of~\cite{kiwan}, where Payne's conjecture was shown to hold. As explained below,
if the annulus is located appropriately, the nodal line remains trapped inside it, and hence is closed. Of course that in such
a setting it would be very hard if not altogether 
impossible to determine the exact location  of the annulus for which such a configuration occurs. To circumvent this issue, we will use a continuity 
argument and develop what we call a {\it sliding procedure}. More precisely, let us place the annulus so that it intersects the convex 
domain (which for simplicity we now take to be a rectangle) and move it continuously from the left to the right (see 
Figure~\ref{fig:rectangle}). The resulting domains have a boundary with two connected components which we will call the
left and right boundary components, depending on whether they contain the left- and right-most vertices of the rectangle, respectively.
Let us recall that the nodal line of the second eigenfunction over the rectangle coincides with the vertical segment bisecting the
rectangle (the dotted line in Figure~\ref{fig:rectangle}). When the annulus is very thin the nodal line of the second 
eigenfunction over the domain constructed (in blue in Figure~\ref{fig:rectangle}) must be close in some sense to  this bisecting
segment. Hence when the annulus is placed so that the intersection with the rectangle lies on the left of
the bisecting segment, the nodal line will likely touch only the right boundary component, as in 
Figure~\ref{fig:rectangle_t=t--}. And when the annulus intersects the rectangle on the right of the bisecting segment, the nodal
line will touch only the left boundary component, cf. Figure~\ref{fig:rectangle_t=t++}.

\begin{figure}[h]
    \centering
    \begin{subfigure}[b]{0.3\textwidth}
        \centering
        \includegraphics[height=.2\textheight]{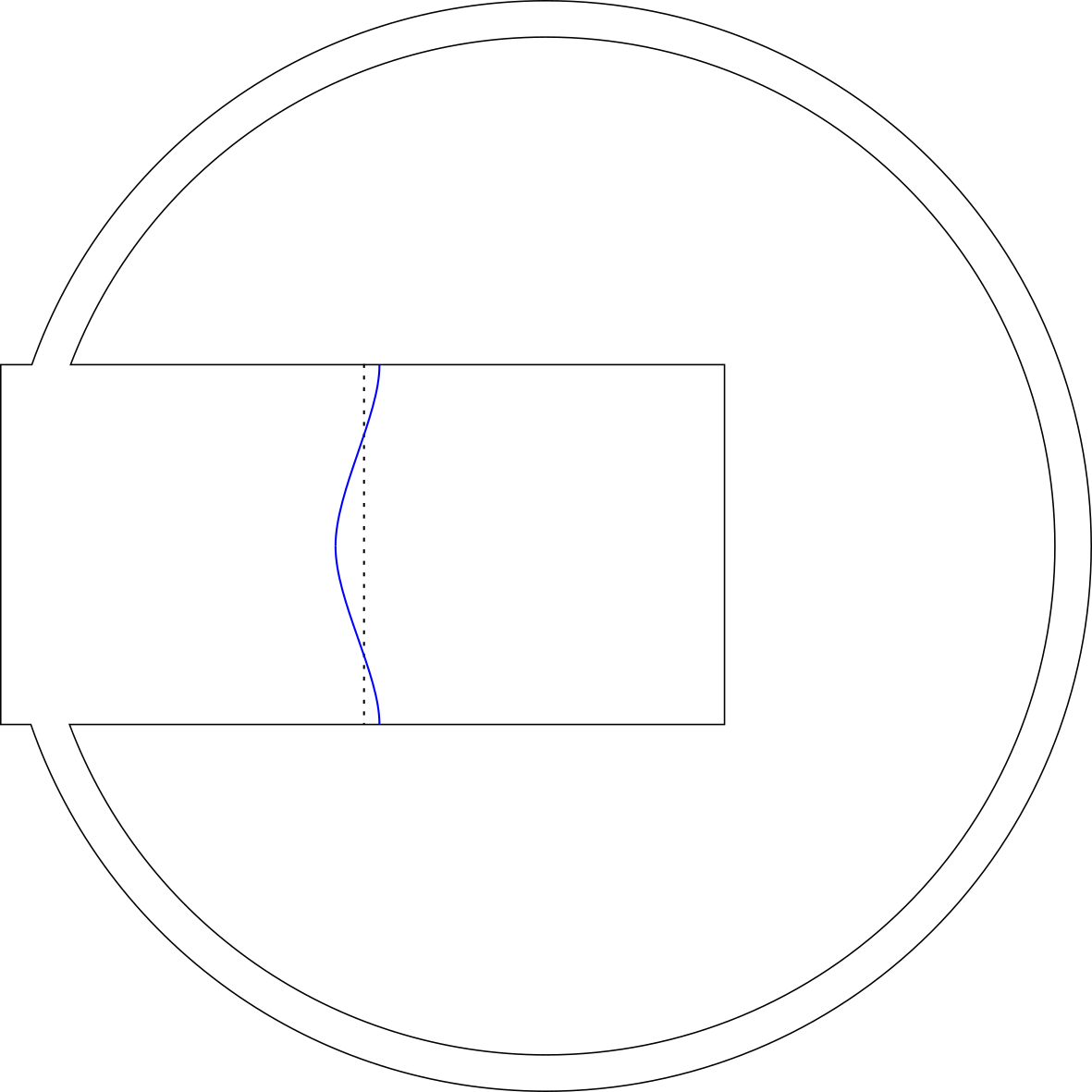}
        \caption{}
        \label{fig:rectangle_t=t--}
    \end{subfigure}
    \hspace{1em}
    \begin{subfigure}[b]{.3\textwidth}  
        \centering 
        \includegraphics[height=.2\textheight]{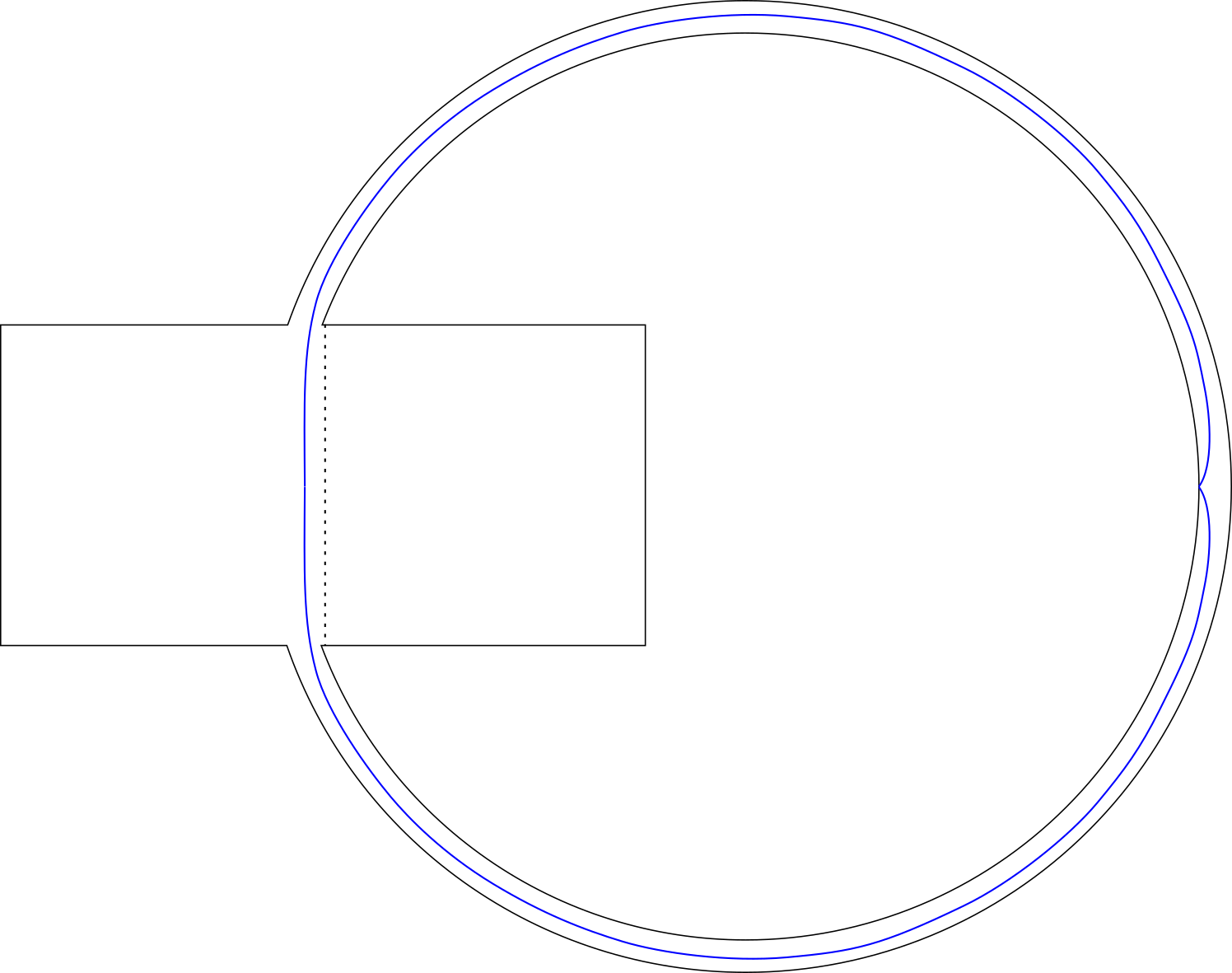}
        \caption{}
        \label{fig:rectangle_t=t-}
    \end{subfigure}
    \vskip\baselineskip
    \begin{subfigure}[b]{0.3\textwidth}   
        \centering 
        \includegraphics[height=.2\textheight]{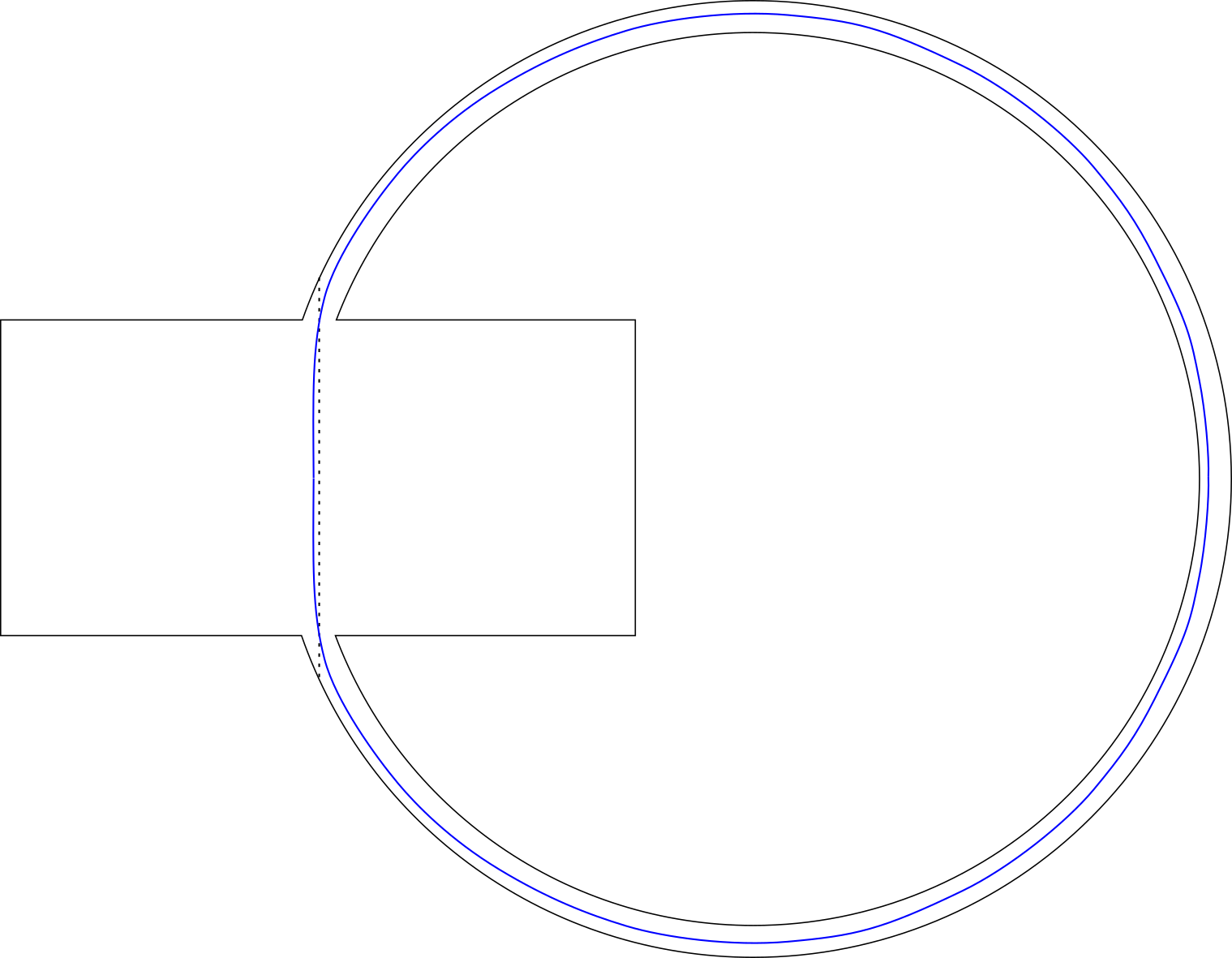}
        \caption{}
        \label{fig:rectangle_t=t0}
    \end{subfigure}
    \hspace{5em}
    \begin{subfigure}[b]{0.3\textwidth}   
        \centering 
        \includegraphics[height=.2\textheight]{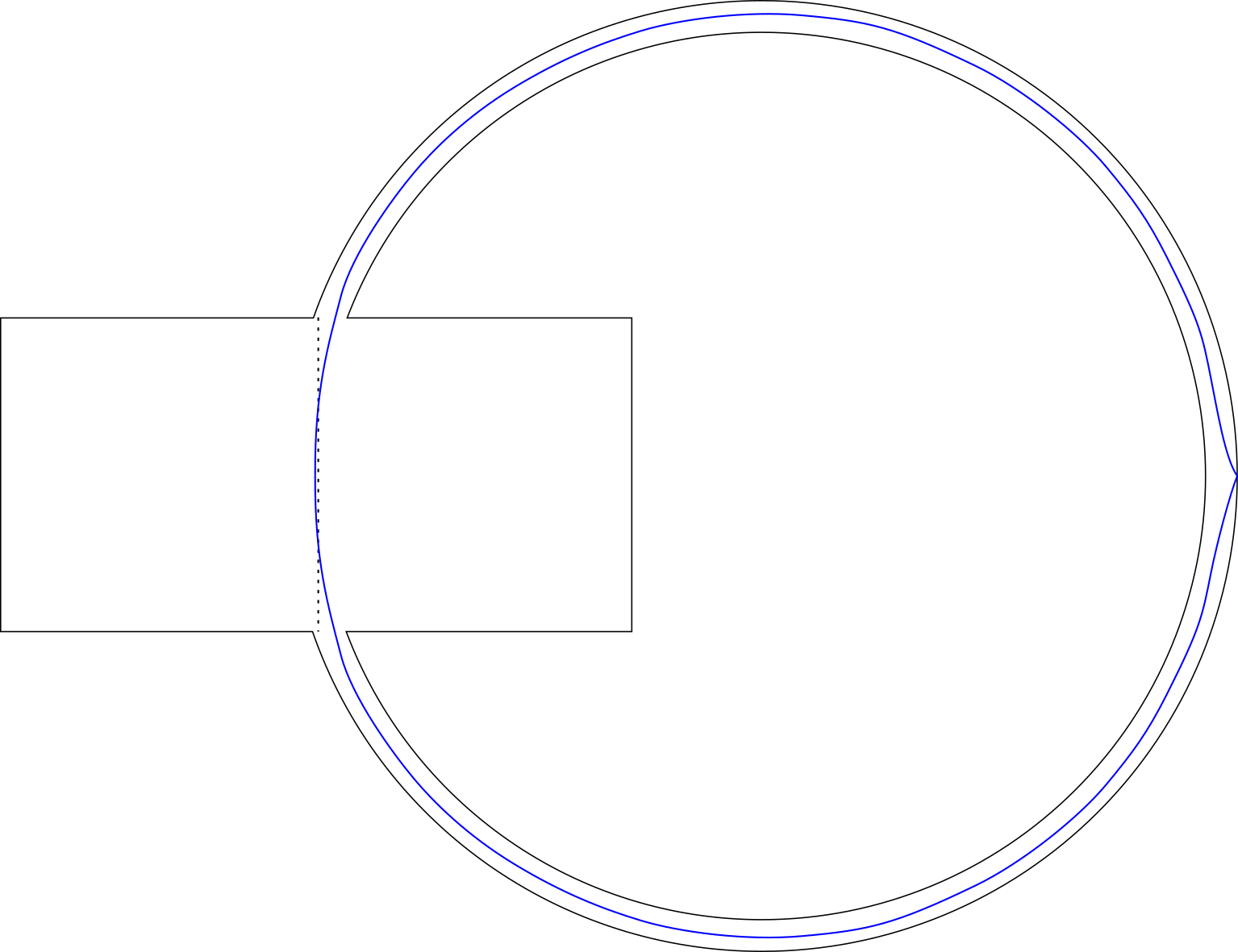}
        \caption{}
        \label{fig:rectangle_t=t+}
    \end{subfigure}
    \vskip\baselineskip
    \begin{subfigure}[b]{0.3\textwidth}  
        \centering 
        \includegraphics[height=.2\textheight]{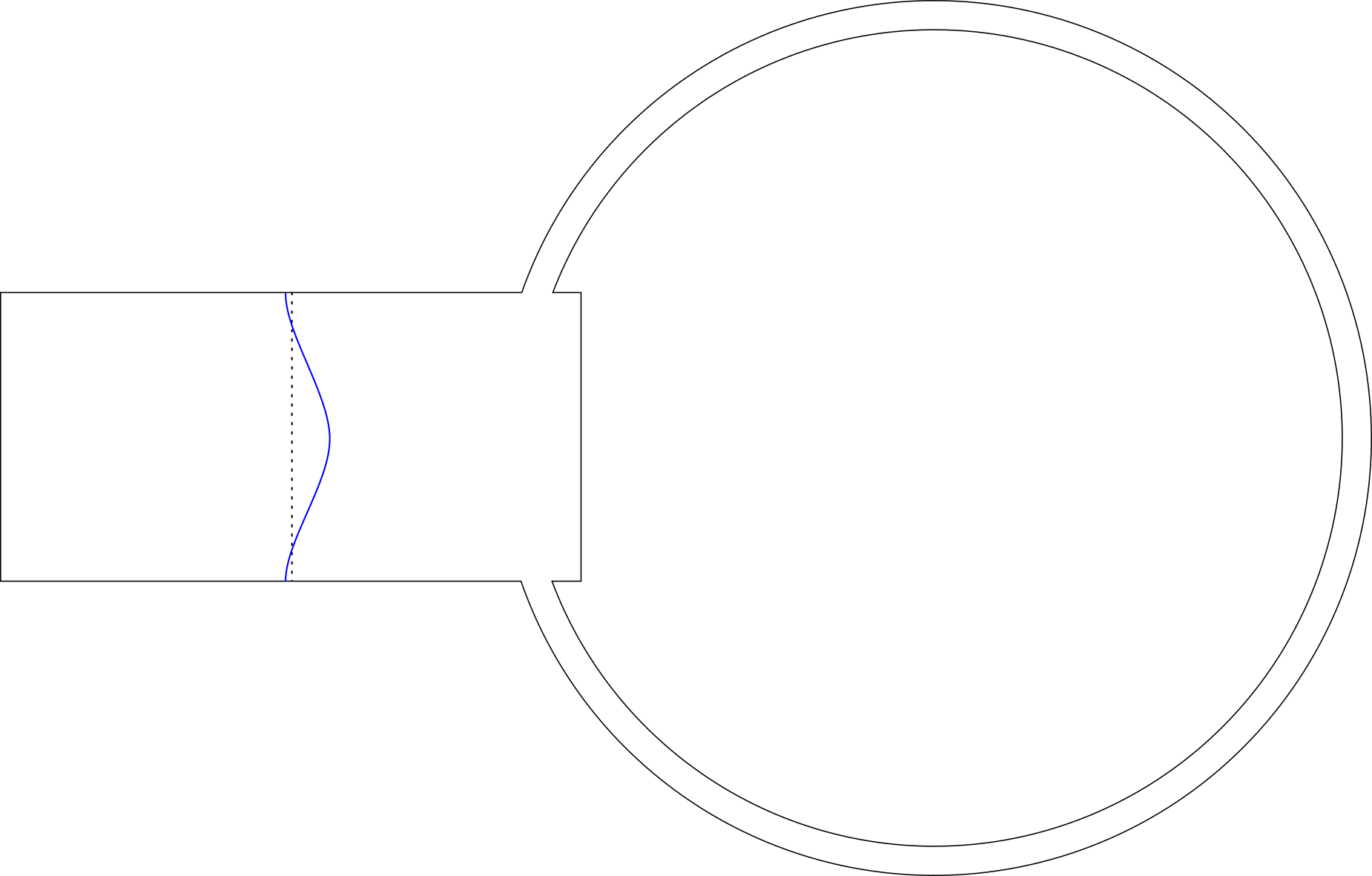}
        \caption{}
        \label{fig:rectangle_t=t++}
    \end{subfigure}
    \caption{The family of doubly-connected domains described in the introduction, in different configurations. In each case, the
    dotted line is the segment bisecting the rectangle and the blue line is the expected profile of the nodal line.}
    \label{fig:rectangle}
\end{figure}

Furthermore, as one moves the annulus continuously from the left to the right the nodal line should also move continuously in some sense.
For instance, we expect that the intersection points between the nodal line and the boundary of the domain, that we shall call impact points below, evolve continuously
on the boundary. This continuity result is made formal in Lemma~\ref{lemme:continuité ligne nodale}. In particular, the
impact points cannot jump from the right to the left boundary component as we move from the situation of
Figure~\ref{fig:rectangle_t=t--} to the situation of Figure~\ref{fig:rectangle_t=t++}. Actually, the only way for those
points to switch from one component to another is to eventually merge, as in Figure~\ref{fig:rectangle_t=t-}. This is the only
possibility for the nodal line to detach from one boundary component (Figure~\ref{fig:rectangle_t=t0}) and then move continuously
toward the other boundary component (Figure~\ref{fig:rectangle_t=t+}). In particular, during the switch from the right to the 
left component, the nodal line remains closed inside the domain, which proves the claim.

So as not to add further technicalities which are beyond the scope of the paper, we will not study the evolution of the nodal line
in this level of detail, and we will only exhibit some minimal properties ensuring that the nodal line is closed at some point (see for instance Lemma~\ref{lemme:continuité ligne nodale}, where only some lower semi-continuity property of the nodal line with respect to the deformation is proved). Nevertheless, at that special point, the localisation of the nodal line described above and schematically represented in Figure~\ref{fig:rectangle_t=t0} is quite accurate, in the sense that the nodal line must indeed be trapped within the annulus and has to enclose the hole of the domain (see Remarks~\ref{remarque:localisation de la ligne nodale} and~\ref{remarque:sign le long du segment}). We do not claim that this particular configuration holds for any doubly-connected domain for which Payne's conjecture fails.

The previous construction is fairly simple and quite general, in the sense that it does not rely on the geometry of the rectangle and the
annulus in a rigid way. Indeed, instead of the rectangle one could take other kinds of domains symmetric with respect to the $x$-axis. 
Similarly, instead of the annulus, one could take a thin tubular domain, symmetric with respect to the $x$-axis, and let it slide from 
left to right, as described above. We will call the resulting one-parameter family of domains of this type a {\it family of
domains with a sliding handle} (see Definition~\ref{def:sliding domains}). Proving that the nodal line on domains belonging to 
such a family behaves as expected will be the core part in producing our counterexample. This will require several technical details, beginning with Definition~\ref{def:sliding domains}, where in particular some rather strong continuity with respect to the sliding parameter is demanded on the eigenfunctions. It then remains to show that there are indeed families of domains satisfying all the conditions in
Definition~\ref{def:sliding domains}. This will be done starting from the above construction based on a rectangle and a moving annulus. 
However note that in this way one has to handle simultaneously two parameters: one responsible for the width and the other for the translation of
the annulus. This makes the construction quite delicate. Furthermore we have to ensure the strong continuity property for the eigenfunctions, yielding an additional technical degree to the proofs. That is why we will smoothen the domains at the corners (see Figure~\ref{fig:famille_reguliere}).

In Section~\ref{sec:general}, we define the notion of a family of domains with a sliding handle and state the general results
from which Theorem~\ref{thm:resultat principal} follows. Section~\ref{sec:ligne nodale famille doublement connexe}
is dedicated to proving that there exists at least one domain in such a family which has a closed nodal
line. Finally, we show in Section~\ref{sec:existence famille doublement connexe} that a family of domains with a sliding handle does exist.

\section{Families of domains with a sliding handle}\label{sec:general}

Theorem~\ref{thm:resultat principal} will follow from a general result stating that the property of having a closed nodal
line inside the domain holds for any domain built through the sliding procedure described in the introduction. The following
definition lists the requirements on the family of domains that will be needed in order to ensure that the sliding procedure
does produce an eigenfunction with this property. Among those requirements, some uniform continuity (and not just an $L^2$-continuity) of the eigenfunctions with respect to the sliding parameter is required. This will allow us to prove a minimal regularity in the evolution of the nodal line (see Lemma~\ref{lemme:continuité ligne nodale}). To express this stronger continuity assumption it is necessary to pull back the eigenfunctions in a fixed domain, as explained in the Appendix, and this is why we introduce the annulus $\dd$ defined
by $\dd:=B_2\setminus \overline{B_1}$. The connected components $\partial B_1$ and $\partial B_2$
of its boundary will be called the inner and outer boundaries of $\dd$, respectively.

\newcommand{\fdsh}{\textup{FDSH}}
\begin{definition}\label{def:sliding domains}
Let $T$ be a non-degenerate interval. A one-parameter family of bounded connected open subsets $(\Omega_t)_{t\in T}$ of $\R^2$ is called a family of domains with a sliding handle, \fdsh\/ for short, whenever:
\begin{enumerate}
\item For all $t\in T$, $\Omega_t$ is symmetric with respect to the $x$-axis and homeomorphic to $\dd$.
\item For all $t\in T$, the second eigenvalue on $\Omega_t$ is simple.
\item\label{it:signe segment} There exists a segment $\sigma$ contained in the $x$-axis such that for all $t\in T$, $\sigma\subset\Omega_t$ and the second eigenfunction of $\Omega_t$ changes sign along $\sigma$.
\item The eigenfunctions are uniformly continuous on $t$, in the sense that for each $t\in T$ there exists a second eigenfunction $u_t$ on $\Omega_t$ and a pull-back $\Phi_t:\dd\to\Omega_t$ preserving the symmetry with respect to the $x$-axis, for which the pulled-back eigenfunction $v_t:=u_t\circ\Phi_t$ is continuous with respect to $t\in T$ uniformly with respect to $x\in \overline{\dd}$. In other words,
$T\ni t\mapsto v_t\in C(\overline{\dd})$
is continuous.
\item There exists $t_-\in T$ and $t_+\in T$ such that the nodal line of $v_{t_-}$ does not touch the outer boundary of $\dd$ and the nodal line of $v_{t_+}$ does not touch the inner boundary of $\dd$.
\end{enumerate}
\end{definition}

\begin{figure}[h]
\includegraphics[height=.3\textwidth]
{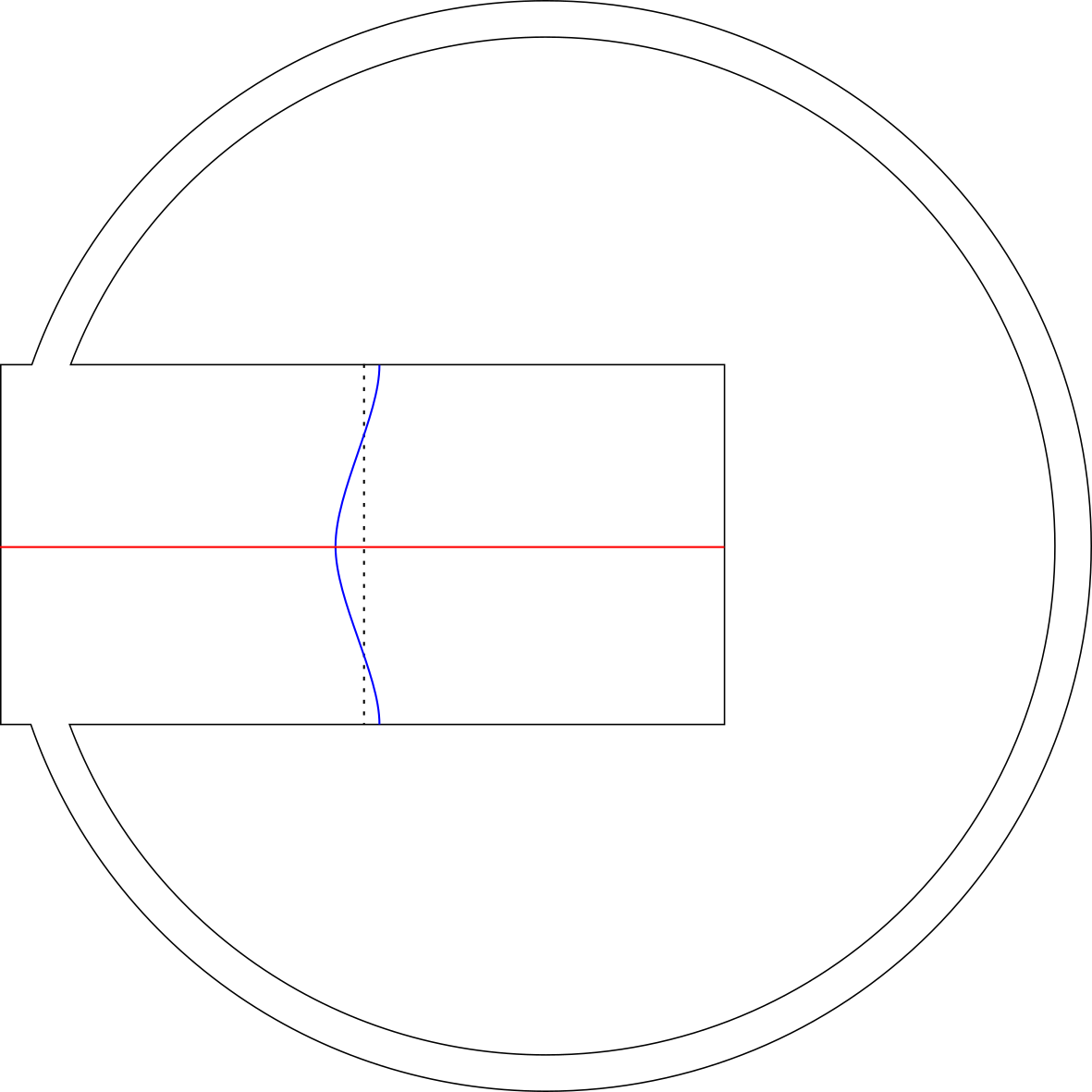}
\quad
\includegraphics[height=.3\textwidth]
{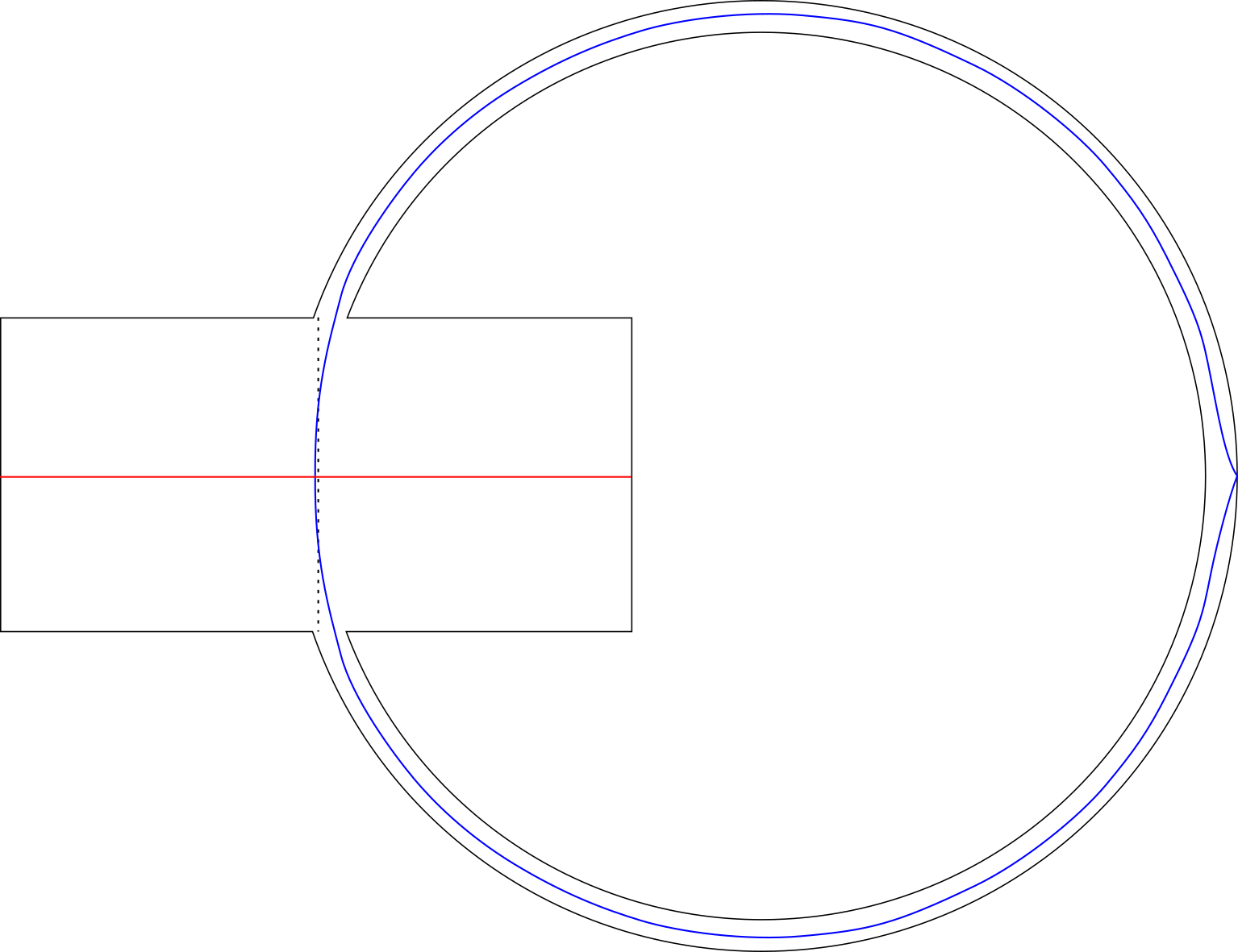}
\caption{Two samples of an \fdsh. The expected nodal line is in blue. The segment $\sigma$ along which the eigenfunction changes sign is in red.}
\end{figure}

\iffalse
\begin{remarque}
Observe that the last property is one of the most important, since otherwise one could take a family made of just one domain.
\end{remarque}
\fi

The main step towards Theorem~\ref{thm:resultat principal} is to prove that, for an \fdsh, one of the shapes must have a closed nodal line inside the domain. Of course, the definition was tailored to that end.

\begin{theoreme}\label{thm:ligne nodale famille doublement connexe}
Let $(\Omega_t)_{t\in T}$ be an \fdsh. Then, there exists $t_0\in T$ such that the nodal line of a second eigenfunction in $\Omega_{t_0}$ does not touch the boundary, and hence is a closed Jordan curve inside $\Omega_{t_0}$.
\end{theoreme}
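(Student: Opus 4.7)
The plan is to argue by contradiction: assume that for every $t\in T$ the nodal set $N_t$ of $v_t$ meets $\partial\dd$. Introduce the subsets $C_{\mathrm{in}}:=\{t\in T : N_t\cap\partial B_1\neq\emptyset\}$ and $C_{\mathrm{out}}:=\{t\in T : N_t\cap\partial B_2\neq\emptyset\}$; by hypothesis $T=C_{\mathrm{in}}\cup C_{\mathrm{out}}$, and by property~(5) of Definition~\ref{def:sliding domains} one has $t_-\notin C_{\mathrm{out}}$ and $t_+\notin C_{\mathrm{in}}$. I may further assume $t_-\in C_{\mathrm{in}}$ and $t_+\in C_{\mathrm{out}}$, for otherwise $t_0=t_-$ or $t_0=t_+$ already works. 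A preliminary observation: simplicity of $\lambda_2(\Omega_t)$ (property~(2)) together with the reflection symmetry of $\Omega_t$ forces $v_t$ to be either symmetric or antisymmetric with respect to the $x$-axis, and the antisymmetric alternative is excluded by the sign change of $v_t$ along $\sigma\subset\{y=0\}$ in property~(3); hence $v_t$, and therefore $N_t$, is symmetric with respect to the $x$-axis.

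The second step is to show that $C_{\mathrm{in}}$ and $C_{\mathrm{out}}$ are closed in $T$. For $C_{\mathrm{in}}$: given $t_n\to t_0$ with $t_n\in C_{\mathrm{in}}$, pick impact points $y_n\in N_{t_n}\cap\partial B_1$; after extraction assume $y_n\to y\in\partial B_1$. Since each $y_n$ is a limit of interior zeros of $v_{t_n}$, a diagonal argument produces interior points $z_n\in\dd$ with $v_{t_n}(z_n)=0$ and $z_n\to y$. Uniform continuity of $t\mapsto v_t$ in $C(\overline{\dd})$ (property~(4)) then gives $v_{t_0}(z_n)=\bigl(v_{t_0}(z_n)-v_{t_n}(z_n)\bigr)+v_{t_n}(z_n)\to 0$, so by continuity of $v_{t_0}$ the point $y$ is a limit of interior zeros of $v_{t_0}$, that is $y\in N_{t_0}$ and $t_0\in C_{\mathrm{in}}$. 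The same argument applies to $C_{\mathrm{out}}$.

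The third and key step is the disjointness $C_{\mathrm{in}}\cap C_{\mathrm{out}}=\emptyset$. If $N_t$ met both boundary components, Courant's theorem (exactly two nodal domains) together with the reflection symmetry of $N_t$ would restrict the possible topologies of $N_t$ to, essentially, one of two basic scenarios: (a)~a single inner-to-outer arc contained in the $x$-axis, or (b)~a symmetric pair of inner-to-outer arcs, one in each half-plane. Case~(a) is incompatible with Courant, since removing a single arc from the annulus $\dd$ leaves a simply connected region and thus only one nodal domain. Case~(b) is incompatible with property~(3): the two arcs partition $\dd$ into a ``left'' and a ``right'' component, each symmetric and containing one of the two $x$-axis segments of $\dd$; since $\sigma$ is connected and lies on the $x$-axis it is entirely in one nodal domain, so $v_t$ has constant sign along $\sigma$. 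Any more elaborate symmetric configuration meeting both boundaries reduces to one of these cases or violates the Courant count.

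Combining the three steps, $C_{\mathrm{in}}$ and $C_{\mathrm{out}}$ are disjoint, closed, nonempty subsets of the connected interval $T$ whose union is $T$, which is impossible. Hence the contradiction hypothesis fails and there exists $t_0\in T$ with $N_{t_0}\cap\partial\dd=\emptyset$; Courant's two-nodal-domain requirement then forces $N_{t_0}$ to be a closed Jordan curve inside $\Omega_{t_0}$. The main obstacle I anticipate is the third step: carrying out the topological case analysis rigorously, ensuring that every symmetric nodal configuration meeting both boundary components is excluded using only Courant's theorem, the reflection symmetry, and the sign change on the fixed segment $\sigma$, is where the geometric care must be concentrated.
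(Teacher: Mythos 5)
Your overall architecture coincides with the paper's: partition $T$ into the sets of parameters for which the nodal set meets $\partial B_1$, respectively $\partial B_2$, show these are closed, nonempty and disjoint, and contradict the connectedness of $T$. Your step 2 is the paper's Lemma~\ref{lemme:continuité ligne nodale} in sequential form, but the final inference there is off: from $z_n\to y\in\partial B_1$ and $v_{t_0}(z_n)\to 0$ you conclude that $y$ is a limit of interior zeros of $v_{t_0}$, whereas the $z_n$ are zeros of $v_{t_n}$, not of $v_{t_0}$, and $v_{t_0}(z_n)\to v_{t_0}(y)=0$ holds for \emph{any} sequence converging to a Dirichlet boundary point. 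What is needed is that $y$ belongs to $\mathcal{N}(v_{t_0}|_{\dd})=\overline{\{x\in\dd:v_{t_0}(x)=0\}}$, and uniform convergence alone does not give this (a priori the interior zeros of $v_{t_n}$ near $y$ could disappear in the limit). This is a repairable technicality of essentially the same nature as what the paper itself elides, so I do not regard it as the main problem.

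The genuine gap is your step 3, which is the paper's Lemma~\ref{lemme:E recouvrent T} and carries the real content of the theorem; you have correctly identified it as the hard part, but as set up it would not go through. First, your case analysis presupposes that $N_t$ is a union of embedded arcs and loops. This requires knowing that $v_t$ has no zero of its gradient on its interior zero set — equivalently, that the second eigenfunction $u_t$ has no critical zero in $\Omega_t$ — which is a nontrivial input you never invoke: the paper obtains it from the local structure of eigenfunction nodal sets combined with the two-nodal-domain count (via \cite[Proposition~4.1]{helffer-hoffmann-ostenhof-jauberteau-lena}), and only then does the implicit function theorem make the nodal set a $1$-manifold. Without it, "arcs" are not available and Courant-counting arguments cannot start. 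Second, your enumeration of symmetric configurations meeting both circles is incomplete: for instance, an arc from $\partial B_1$ back to $\partial B_1$ together with a disjoint arc from $\partial B_2$ back to $\partial B_2$ meets both components while containing no inner-to-outer arc, and closed loops may a priori coexist with boundary arcs; the sentence "any more elaborate configuration reduces to these or violates the Courant count" is exactly the claim that needs proof. The paper sidesteps the enumeration entirely: it classifies only the connected component $\gamma$ of the nodal set through the sign-change point $P\in\Phi_t^{-1}(\sigma)$ (either a circle, or an arc which by symmetry equals its own reflection and hence has its one or two endpoints on a single circle $\partial B_k$), closes $\gamma$ up with a boundary arc into a Jordan curve, and observes that $\gamma$ alone already separates $\dd$ into two components, so any additional nodal point would create a third nodal domain — whence $\mathcal{N}=\overline{\gamma}$ and all impact points lie on one boundary circle. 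Adopting that reduction is what turns your step 3 into a complete argument.
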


\begin{remarque}\label{remarque:localisation de la ligne nodale}
If the \fdsh\/ has the further property that the segment $\sigma$ is a connected component of $\Omega_t\cap\{y=0\}$ for all $t\in T$, and that the second eigenfunction on $\Omega_t$ changes sign an odd number of times along $\sigma$, then the nodal line over $\Omega_{t_0}$ must enclose the hole of the domain. This is because in this case the two boundary components of $\Omega_{t_0}$ cannot belong to the closure of the same nodal domain, hence the nodal line encloses one of those boundary components.
\end{remarque}

The last step is to actually construct an \fdsh. This is ensured by the next proposition.

\begin{proposition}\label{prop:definition de la famille de domaines}
There exists an \fdsh.
\end{proposition}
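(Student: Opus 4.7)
The plan is to construct an explicit two-parameter family $\Omega_{t,\varepsilon}$ and show that, for $\varepsilon$ sufficiently small and $t$ ranging over a suitable interval $T$, the one-parameter family $(\Omega_{t,\varepsilon})_{t\in T}$ verifies the five conditions of Definition~\ref{def:sliding domains}. Concretely, I start from a long thin horizontal rectangle $R$ whose second Dirichlet eigenfunction has a vertical bisecting nodal line, and I add a thin annulus-like handle of width $\varepsilon$, symmetric with respect to the $x$-axis and centred at $(t,0)$. The domain $\Omega_{t,\varepsilon}$ is obtained as the union of the two, with the hole enclosed by the handle kept, and with the corners smoothed so that its boundary is $C^\infty$. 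By construction, each $\Omega_{t,\varepsilon}$ is symmetric with respect to the $x$-axis and homeomorphic to $\dd$, which immediately yields Condition~(1).

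For Condition~(2), as $\varepsilon\to 0$ the domains $\Omega_{t,\varepsilon}$ converge to $R$ (or to a slight variant obtained by slitting it) in a topology compatible with spectral convergence, such as the Hausdorff complementary distance for uniformly Lipschitz domains. Since the second eigenvalue of a long thin rectangle is simple, a standard perturbation argument ensures simplicity of the second eigenvalue of $\Omega_{t,\varepsilon}$ uniformly in $t$ on compact sets, for $\varepsilon$ small. Condition~(3) is then handled by choosing $\sigma$ as a short segment centred at the origin on the $x$-axis: the second eigenfunction on $R$ vanishes on the vertical bisecting segment and changes sign there, so the same property carries over to $\Omega_{t,\varepsilon}$ via the uniform convergence delivered by Condition~(4).

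Condition~(4) is the main technical step and the part I expect to be the principal obstacle. For each $t$ I construct a smooth diffeomorphism $\Phi_t:\dd\to\Omega_{t,\varepsilon}$ commuting with the reflection $y\mapsto -y$ and depending on $t$ continuously in, say, the $C^2$ topology; such a family exists because $\partial\Omega_{t,\varepsilon}$ varies smoothly with $t$, which one can control via a tubular neighbourhood of a reference boundary together with the implicit function theorem. Pulling back Problem~\eqref{eigprob} through $\Phi_t$ produces on the fixed domain $\dd$ a family of uniformly elliptic operators with $C^1$ coefficients depending continuously on $t$. The simplicity of the second eigenvalue together with standard elliptic regularity then yields continuity of the normalised pulled-back eigenfunctions $v_t$ in $t$ with values in $H^2(\dd)$, and hence in $C(\overline{\dd})$ by the Sobolev embedding in dimension two. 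The difficulty lies in securing the elliptic estimates uniformly in $t$ and in building a genuinely smooth parametrisation of the family $\Phi_t$, which is precisely why the smoothing of corners is needed.

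Finally, Condition~(5) is verified by taking $t_-$ and $t_+$ at the two extremes of the sliding range. When $t$ is far to the left, so that the hole of $\Omega_{t,\varepsilon}$ lies entirely on one side of the vertical bisecting segment of $R$, the second eigenfunction of $\Omega_{t,\varepsilon}$ is close, away from the handle, to the second eigenfunction of $R$, whose nodal line is the vertical bisecting segment; a topological/continuity argument then shows that the nodal line of $u_{t_-}$ avoids the boundary component enclosing the hole, and transporting this through $\Phi_{t_-}$ gives that $\mathcal{N}(v_{t_-})$ avoids one of $\partial B_1$, $\partial B_2$. The analogous argument at a far-right position yields $t_+$ with the opposite conclusion, and consistency between the two conventions is secured by choosing the orientation of $\Phi_t$ continuously along $T$. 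This completes the verification of Definition~\ref{def:sliding domains} and hence the proof of Proposition~\ref{prop:definition de la famille de domaines}.
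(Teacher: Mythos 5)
Your proposal follows essentially the same route as the paper: a two-parameter family (rectangle plus a thin sliding annular handle of width $\varepsilon$, with smoothed corners), spectral convergence as $\varepsilon\to0$ to transfer simplicity of the second eigenvalue and the sign change along $\sigma$, an implicit-function-theorem construction of the pulled-back eigenfunctions to get continuity in $C(\overline{\dd})$, and localisation of the nodal line near the bisecting segment at the extreme positions $t_\pm$. One minor imprecision: the sign change along $\sigma$ for all $t$ cannot be deduced from Condition~(4) (which is continuity in $t$ at fixed $\varepsilon$) but rather, as in the paper, from local elliptic estimates in a fixed neighbourhood of $\sigma$ giving uniform convergence of the eigenfunctions as $\varepsilon\to0$, uniformly in $t$; also the Mosco limit as $\varepsilon\to0$ is the rounded rectangle itself, not a slit variant.
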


\begin{remarque}\label{remarque:sign le long du segment}
An inspection of the proof of the proposition shows that the \fdsh\/ constructed (in Section~\ref{sec:existence famille doublement connexe}) satisfies the property described in Remark~\ref{remarque:localisation de la ligne nodale}. This follows from the fact that the second eigenfunction of the domain $\Omega$ of Lemma~\ref{lemme:famille à deux paramètres} is odd with respect to the $y$-axis.
\end{remarque}

Equipped with the previous two results, the proof of Theorem~\ref{thm:resultat principal} follows immediately.

\begin{proof}[Proof of Theorem~\ref{thm:resultat principal}]
Combine Proposition~\ref{prop:definition de la famille de domaines} and Theorem~\ref{thm:ligne nodale famille doublement connexe}.
\end{proof}

The proof of Theorem~\ref{thm:ligne nodale famille doublement connexe} is provided in section~\ref{sec:ligne nodale famille doublement connexe}. The proof of Proposition~\ref{prop:definition de la famille de domaines}, which is actually the most technical part, is postponed to section~\ref{sec:existence famille doublement connexe}.

\section{The nodal line in a family of domains with a sliding handle}\label{sec:ligne nodale famille doublement connexe}

In this section, we prove Theorem~\ref{thm:ligne nodale famille doublement connexe}. To that end, note that if $(\Omega_t)_{t\in T}$ is an \fdsh, the second eigenfunction $u_t$ is symmetric with respect to the $x$-axis, and since the pull-back $\Phi_t$ preserves the symmetry, $v_t$ is also symmetric with respect to the $x$-axis, together with its nodal line and the corresponding impact points (note that the notions of nodal line, nodal domain, and impact point as defined in the introduction make sense for a general function). This property is one of the key ingredients to prove the following result, which states that there cannot be impact points of $v_t$ simultaneaously on the inner and the outer boundary components of $\dd$.

\begin{lemme}\label{lemme:E recouvrent T}
Let $v\in C^1(\dd)\cap C(\overline{\dd})$ be a function with two nodal domains such that, for all $x\in \dd$, at least one of $v(x)$ or
$\nabla v(x)$ is nonzero. Assume furher that $v$ is symmetric with respect to the $x$-axis, and changes sign along a segment on the 
intersection of $\dd$ and the $x$-axis. Then $v$ has a connected nodal line with at most two impact points, which are either both located
on $\partial B_1$ or both on $\partial  B_2$.
\end{lemme}

\begin{figure}[h]
    \centering
    \begin{subfigure}[b]{0.3\textwidth}
        \centering
        \includegraphics[height=.2\textheight]{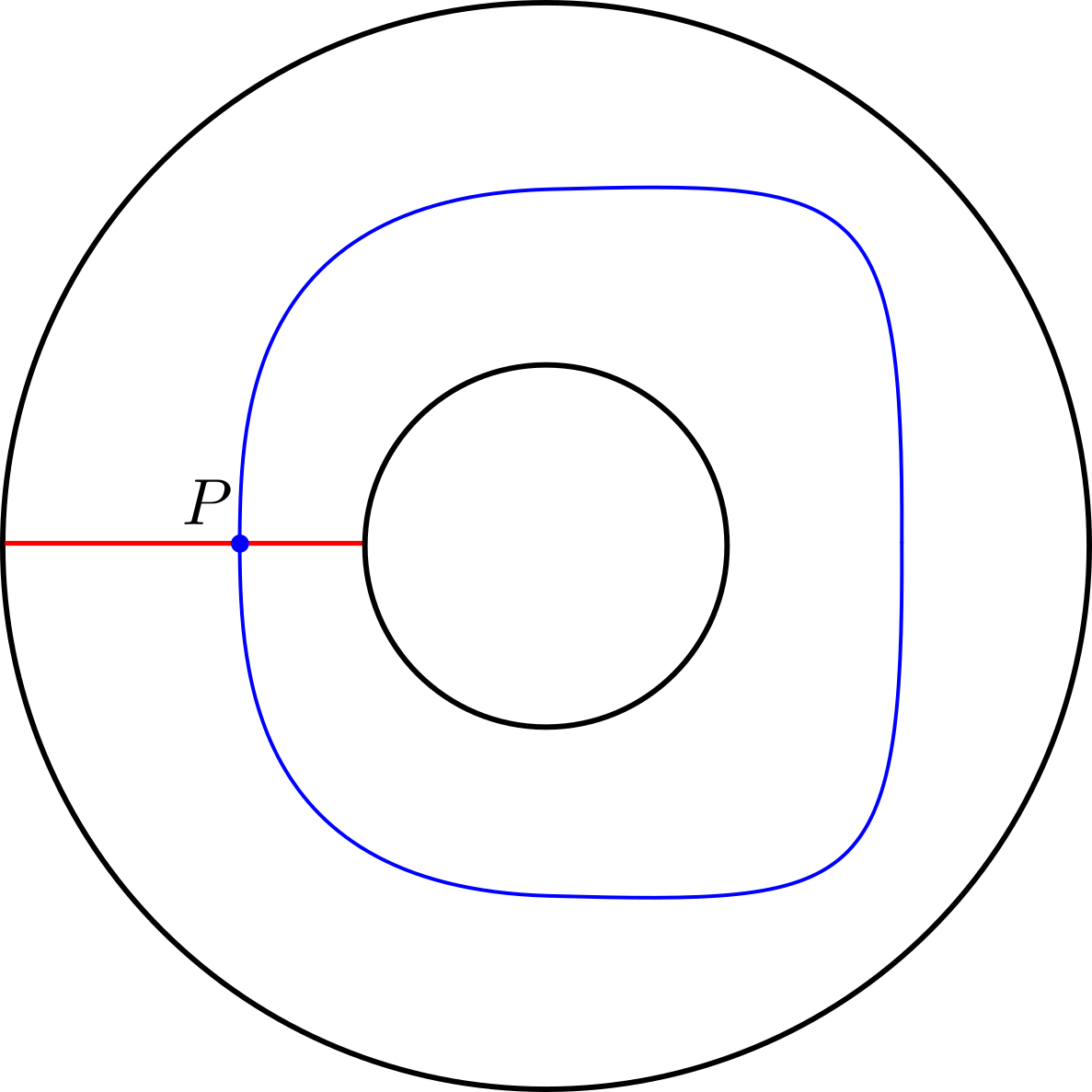}
        \caption{}
        \label{fig:ligne_nodale_anneau_1}
    \end{subfigure}
    \hspace{1em}
    \begin{subfigure}[b]{.3\textwidth}  
        \centering 
        \includegraphics[height=.2\textheight]{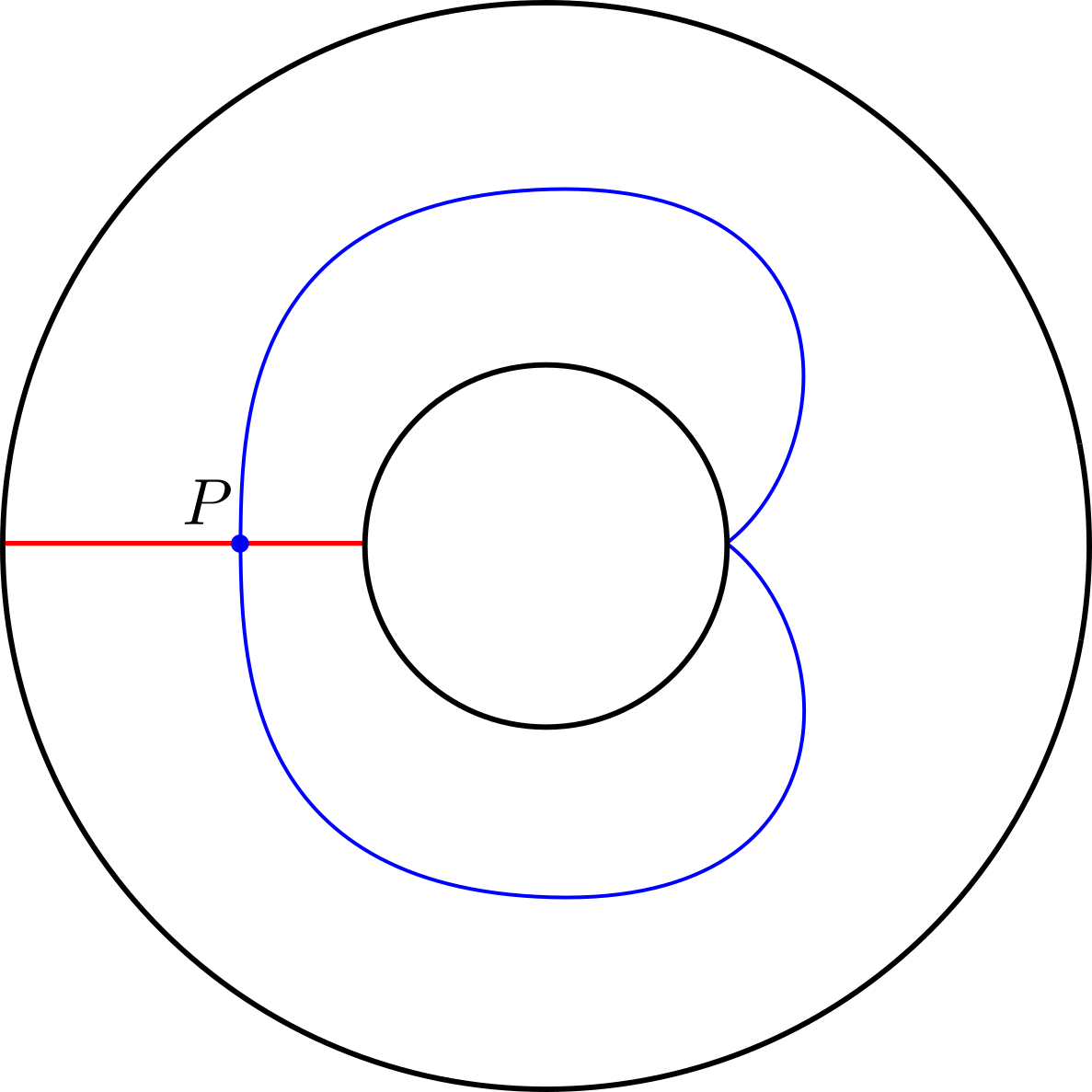}
        \caption{}
        \label{fig:ligne_nodale_anneau_2}
    \end{subfigure}
    \hspace{1em}
    \begin{subfigure}[b]{0.3\textwidth}   
        \centering 
        \includegraphics[height=.2\textheight]{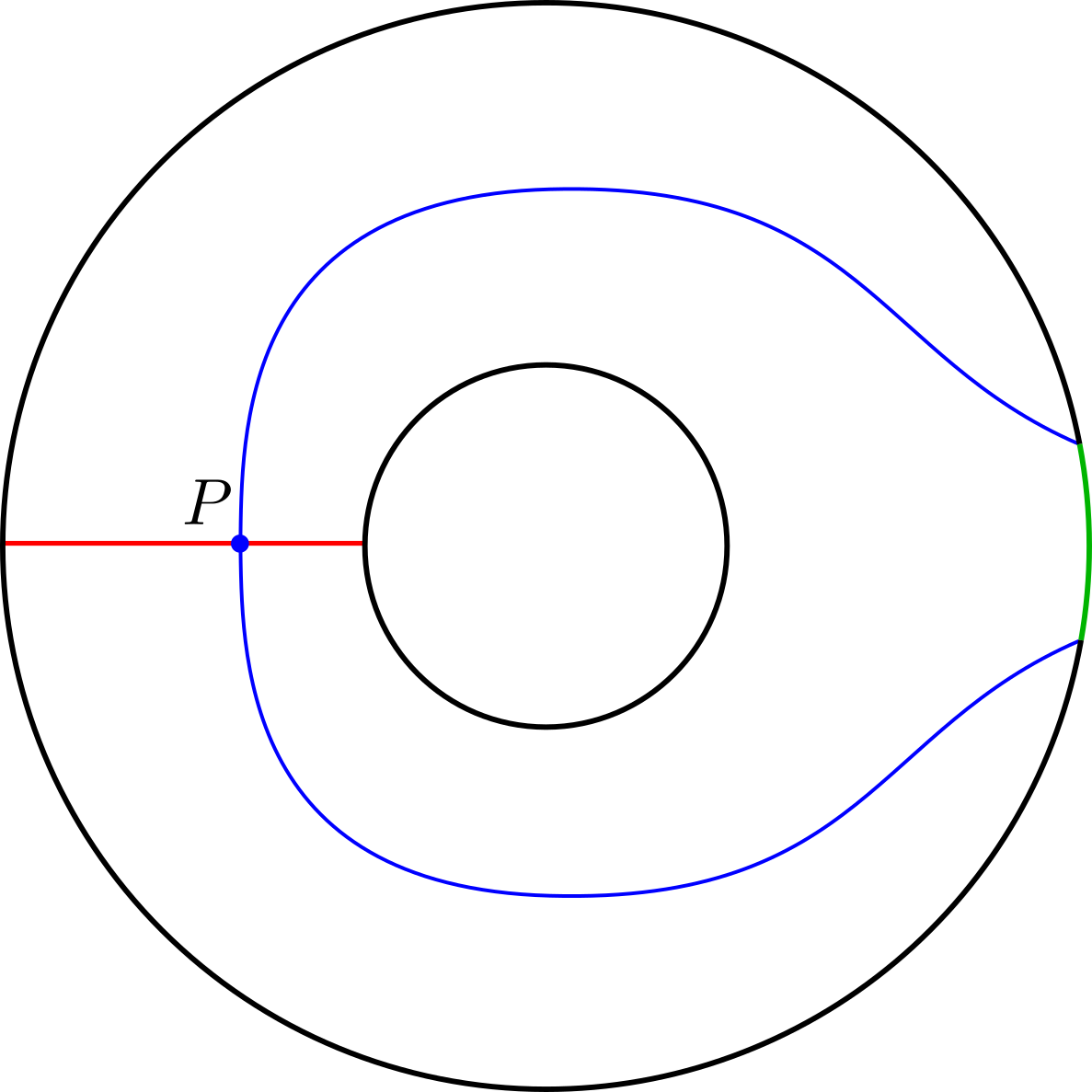}
        \caption{}
        \label{fig:ligne_nodale_anneau_3}
    \end{subfigure}
    \caption{Three typical situations for the function $v$ of Lemma~\ref{lemme:E recouvrent T}. The nodal line is drawn in blue, and $v$ changes sign along the red segment, at the point $P$. The green arc in (\subref{fig:ligne_nodale_anneau_3}) corresponds to the path $\gamma'$ constructed in the proof of the lemma.}
    \label{fig:ligne_nodale_anneau}
\end{figure}

\begin{proof}
By the intermediate value theorem $v$ must vanish at some point $P$ of the segment. We then denote by $\gamma$ the connected component of 
$\mathcal{N}\cap \dd$ that passes through this point, where $\mathcal{N}$ is the nodal line of $v$. Since $v$ has no double zero inside $\dd$, $\gamma$ is a bounded 
connected curve of $\R^2$, by the implicit function Theorem, and it is homeomorphic to either a circle (Figure~\ref{fig:ligne_nodale_anneau_1}) or an open segment whose endpoints lie on $\partial\mathcal{A}$ (Figure~\ref{fig:ligne_nodale_anneau_2} and \ref{fig:ligne_nodale_anneau_3}).

In the first case, by Jordan's Theorem, $\gamma$ splits $\dd$ into two nodal domains. Hence any other portion of the nodal line that would not already be contained in $\gamma$ would yield a third nodal domain, which is impossible. Thus we conclude that $\mathcal{N}=\overline{\gamma}=\gamma\subseteq \dd$. In that situation, there is no impact point at all, and the claim follows.

Assume now that $\gamma$ is homeomorphic to a segment. We point out that $\gamma$ cannot itself be a horizontal segment, otherwise it would contain the segment where $v$ changes sign. Besides, observe that $\gamma$ is symmetric with respect to the $x$-axis. This is because its reflection $\tilde{\gamma}$ also contains the point $P$, hence $\gamma$ and $\tilde{\gamma}$ intersect. As a result, $\gamma\cup\tilde{\gamma}$ is a connected portion of the nodal line and it contains the connected component $\gamma$, hence $\gamma=\gamma\cup\tilde{\gamma}$ is symmetric. Consequently, the boundary of $\gamma$ is made of one point or two symmetric points, which are impact points, both belonging either to $\partial B_1$ or to $\partial B_2$. Since $\partial B_1$ and $\partial B_2$ are arcwise connected, there exists a continuous path $\gamma'$ (drawn in green in Figure~\ref{fig:ligne_nodale_anneau_3}) linking the two impact points and remaining in $\partial \dd$. Note that $\gamma'$ might be reduced to a single point (as illustrated in Figure~\ref{fig:ligne_nodale_anneau_2}). The concatenation of $\gamma$ and $\gamma'$ gives rise to a Jordan curve. By Jordan's Theorem, once again, it means that $\gamma$ splits $\dd$ into two nodal domains. Therefore, we have $\mathcal{N}=\overline{\gamma}$ as before and the only impact points are those determined previously, concluding the proof.
\end{proof}

To apply the previous result, one needs to make sure that $v_t$ admits no double zero in $\dd$, or equivalently that $u_t$ admits no double zero in $\Omega_t$. Since $u_t$ is a second eigenfunction, this follows from Courant's Theorem and from the rather basic observation that one cannot have a self intersection of the nodal line and yet keep the number of nodal domains less than three. This was already remarked in \cite[p.~633 (ii)]{H2ON} (see also \cite[Remark~2.2]{linqun}), and it is actually a consequence of \cite[Proposition~4.1]{helffer-hoffmann-ostenhof-jauberteau-lena}.

The last ingredient of the proof is to establish some kind of continuity for the nodal line of $v_t$ with respect to the deformation parameter $t$. Note that, in general, one cannot expect much continuity on the zero set of a family of functions depending continuously on a parameter (see however \cite{camilli}), since some zeros may appear and disappear unexpectedly. This is the case even with simple one-dimensional examples such as $f_t(x)=x^2-t$. In our particular situation where we already know, by Lemma~\ref{lemme:E recouvrent T}, that the zero set of $v_t$ is very simple and that portions of the nodal line cannot appear and disappear in a complicated way, we shall expect a continuity of the nodal line with respect to $t$ in terms of the Hausdorff distance (see e.g. \cite[section~2.2.3]{henrot-pierre} for an introduction to this distance). Yet, what is needed to complete the proof is actually much weaker, and this is why we only state the following elementary result.

\begin{lemme}\label{lemme:continuité ligne nodale}
Let $\Sigma$ be a subset of $\R^2$ and $v_t\in C(\overline{\dd})$ be a family of functions which is continuous with respect to $t\in T$ uniformly with respect to $x\in \overline{\dd}$. Then, the distance between the nodal set of $v_t$ and $\Sigma$ is lower semi-continuous. More precisely, the function
$$
T\ni t\mapsto \dist(\mathcal{N}(v_t|_{\dd}),\Sigma)
$$
is lower semi-continuous, where \emph{dist} denotes the usual distance between subsets of a metric space, defined as the infimum of the distance between pairs of points in the cartesian product of the two subsets.
\end{lemme}

\begin{proof}
For all $t\in T$, the nodal set $\mathcal{N}(v_t|_{\dd})$ is a compact subset of the compact set $\overline{\dd}$. Therefore, if $t_n\to t$, up to a subsequence, $\mathcal{N}(v_{t_n}|_{\dd})$ converges in the sense of the Hausdorff distance to some compact subset $\mathcal{N}_t$ of $\overline{\dd}$ \cite[Theorem~2.2.23]{henrot-pierre}. Furthermore, $\mathcal{N}_t$ must be a subset of $\mathcal{N}(v_t|_{\dd})$ by the uniform convergence of $v_{t_n}$ to $v_t$. As a result $\dist(\mathcal{N}(v_t|_{\dd}),\Sigma)\leq \dist(\mathcal{N}_t,\Sigma)$, and since $\mathcal{N}(v_{t_n}|_{\dd})$ converges to $\mathcal{N}_t$ we have also $\dist(\mathcal{N}_t,\Sigma)=\lim_n \dist(\mathcal{N}(v_{t_n}|_{\dd}),\Sigma)$ \cite[Proposition~2.2.25]{henrot-pierre}. Since the initial subsequence was taken arbitrarily, we obtain $\dist(\mathcal{N}(v_t|_{\dd}),\Sigma)\leq\liminf_n\dist(\mathcal{N}(v_{t_n}|_{\dd}),\Sigma)$, concluding the proof.
\end{proof}

We are now ready to complete the proof of Theorem~\ref{thm:ligne nodale famille doublement connexe}.

\begin{proof}[Proof of Theorem~\ref{thm:ligne nodale famille doublement connexe}]
We consider an \fdsh\/ $(\Omega_t)_{t\in T}$ and pull back the eigenfunctions $u_t$ into the fixed annulus $\dd = B_2\setminus \overline{B_1}$ thanks to the map $\Phi_t:\dd\to\Omega_t$. The
pulled-back function $v_t=u_t\circ \Phi_t$ has two nodal domains, is symmetric with respect to the $x$-axis, and changes sign along the segment $\Phi_t^{-1}(\sigma)$, which is contained in the $x$-axis since $\Phi_t$ preserves the symmetry. Observe also that $u_t$ cannot have a double zero inside $\Omega_t$ by \cite[Proposition~4.1]{helffer-hoffmann-ostenhof-jauberteau-lena}, thus the same holds for $v_t$ in $\dd$. Applying Lemma~\ref{lemme:E recouvrent T}, we conclude that the nodal line $\mathcal{N}_t$ of $v_t$ does not touch $\partial B_1$ and $\partial B_2$ simultaneaously. Furthermore, we know that $\mathcal{N}_{t_-}$ does not intersect $\partial B_2$ while $\mathcal{N}_{t_+}$ does not intersect $\partial B_1$. Assume for the purpose of a contradiction that the nodal line of $v_t$ always touches the boundary of $\Omega_t$, so that the distance between $\mathcal{N}_{t}$ and $\partial \dd$ is always zero. Define $\mathcal{I}_k$ to be the set of times $t\in T$ such that the nodal line of $v_t$ touches $\partial B_k$, that is
$$
\mathcal{I}_k:=\{t\in T: \dist(\mathcal{N}_{t},\partial B_k) = 0\}.
$$
By the previous discussion, we know that the set $\mathcal{I}_1$ contains $t_-$, the set $\mathcal{I}_2$ contains $t_+$, and those two sets form a partition of the interval $T$. Lastly, being closed lower-level sets of a lower-semi continuous function by Lemma~\ref{lemme:continuité ligne nodale}, $\mathcal{I}_1$ and $\mathcal{I}_2$ are closed subsets of $T$. This contradicts the connectedness of $T$. Therefore, $\mathcal{N}_{t_0}$ must remain closed inside $\mathcal{A}$ for some value of $t_0\in T$, and similarly the nodal line of $u_{t_0}$ is closed inside $\Omega_{t_0}$, as claimed. Since $u_{t_0}$ has no double zero, its nodal line is a Jordan curve.
\end{proof}

\section{Existence of a family of domains with a sliding handle}\label{sec:existence famille doublement connexe}

\newcommand{\dbprm}{\Theta}

The purpose of this section is to prove Proposition~\ref{prop:definition de la famille de domaines}. The construction of the \fdsh\/ will be performed
following the procedure described in the Introduction. In particular, we begin by constructing a family depending on two parameters, one responsible for sliding and the other for shrinking the annulus-like handle. For the remaining of the proof, this two-parameter family must satisfy several properties that we list in the following result (see also Figure~\ref{fig:famille_reguliere}).

\begin{lemme}\label{lemme:famille à deux paramètres}
Let $W=(-1,0)$ and $E=(1,0)$. There exists a family of bounded connected open subsets $(\dbprm_{h,t})_{0\leq h<h_0,t\in T}$ of $\R^2$, for some $h_0>0$ and for an open interval $T\subseteq \R$, such that:
\begin{enumerate}
\item For all $0\leq h<h_0$ and all $t\in T$, $\dbprm_{h,t}$ is symmetric with respect to the $x$-axis.
\item For all $t,s\in T$, $\dbprm_{0,t}=\dbprm_{0,s}=:\Omega$.
\item The set $\Omega$ is symmetric with respect to the $y$-axis, the second eigenvalue of $\Omega$ is simple and the second eigenfunction is odd with respect to the $y$-axis. 
\item For all non-zero $0<h<h_0$, $\dbprm_{h,t}$ is doubly-connected. One connected component of its boundary contains $W$ while the other contains $E$ - we call them the left and the right boundary components, respectively. Furthermore, $\dbprm_{h,t}$ contains the whole open line segment $(W,E)$, and there is a neighbourhood $\mathcal{U}$ of this segment, independant of $h$ and $t$, such that $\dbprm_{h,t}\cap\mathcal{U}=\Omega\cap\mathcal{U}$.
\item\label{it:t+ et t-} There exist $t_-,t_+\in T$ such that the vertical segment bisecting $\Omega$ does not touch the left (resp. right) boundary component of $\dbprm_{h,t}$ for $t=t_+$ (resp. $t=t_-$) and for all $0\leq h<h_0$. Furthermore, there exists a neighbourhood $\mathcal{V}$ around the $y$-axis such that $\Omega\cap\mathcal{V}$ is a connected component of $\dbprm_{t_\pm,h}\cap\mathcal{V}$, for all $0\leq h<h_0$.
\iffalse
\item There exist fixed neighbourhoods $\mathcal{U}_W$ of $W$ and $\mathcal{U}_E$ of $E$, where $\dbprm_{h,t}$ does not change, in the sense that for all $0\leq h<h_0$ and all $t\in T$, $\dbprm_{h,t}\cap\mathcal{U}=\Omega\cap \mathcal{U}$, with $\mathcal{U}=\mathcal{U}_W\cup\mathcal{U}_E$.
\fi
\item The sets $\dbprm_{h,t}$ are continuous with respect to $(h,t)\in [0,h_0[\times T$, in the sense of Mosco convergence.
\item For each $0<h<h_0$, the sets $(\dbprm_{h,t})_{t\in T}$ are smooth with a local $C^1$ dependence on $t\in T$. This means that in a neighbourhood of each point of $T$, there exists a family of $C^\infty$-diffeomorphisms $\Phi_{h,t}$ mapping the annulus $\dd$ to $\dbprm_{h,t}$, such that $(t\mapsto\Phi_{h,t},\Phi_{h,t}^{-1}\in C^\infty(\R^d,\R^d)^2)$ is $C^1$. Furthermore, we shall take the transformation $\Phi_{h,t}$ preserving the symmetry with respect to the $x$-axis, and such that $\Phi_{h,t}^{-1}(W)$ belongs to the outer boundary component and $\Phi_{h,t}^{-1}(E)$ belongs to the inner boundary component of $\dd$, for all $t\in T$.
\end{enumerate}
\end{lemme}

\begin{figure}[h]
\begin{subfigure}{.45\textwidth}
\includegraphics[height=.2\textheight]
{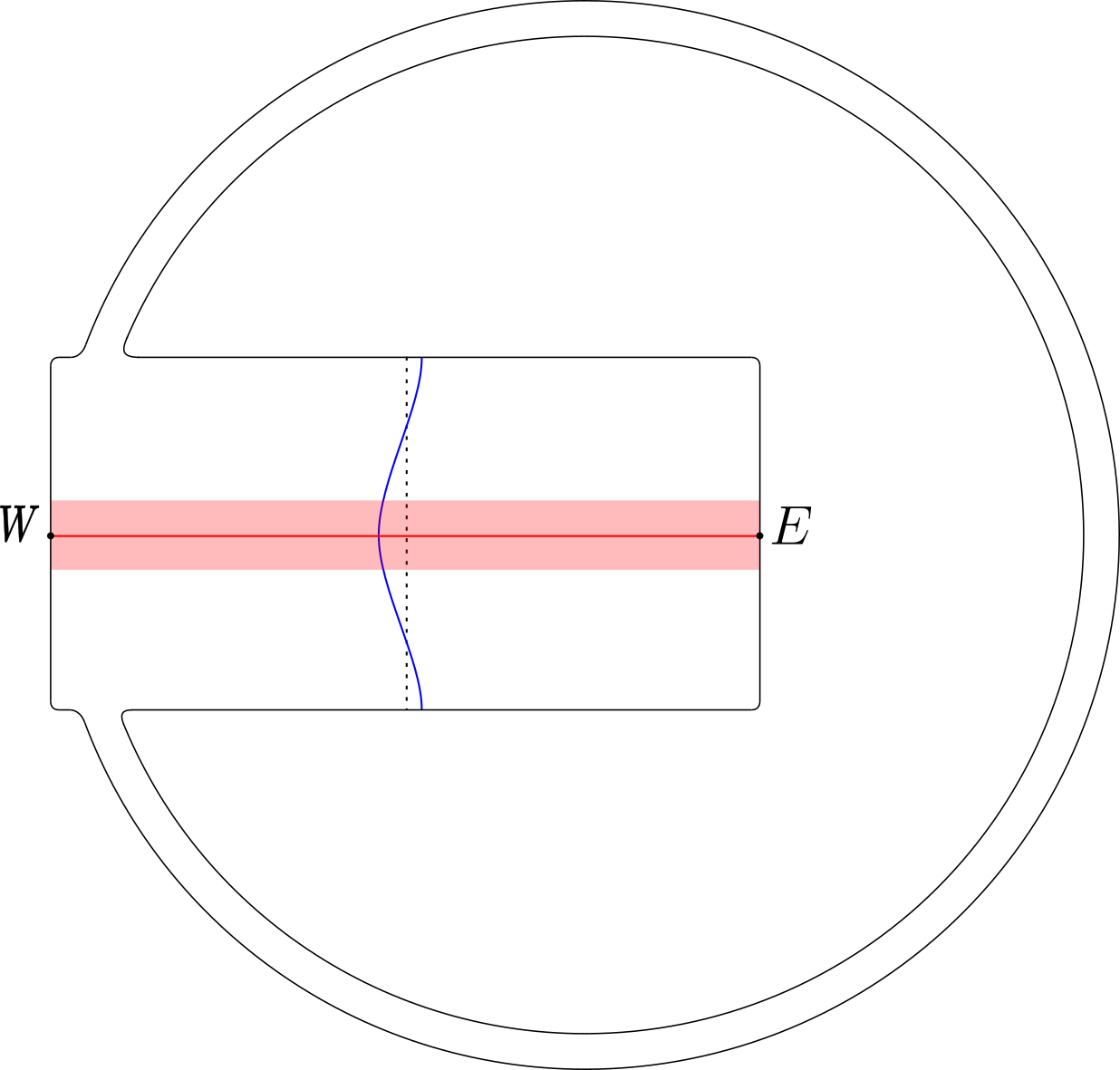}
\caption{$t_-$}
\end{subfigure}
\quad
\begin{subfigure}{.45\textwidth}
\includegraphics[height=.2\textheight]{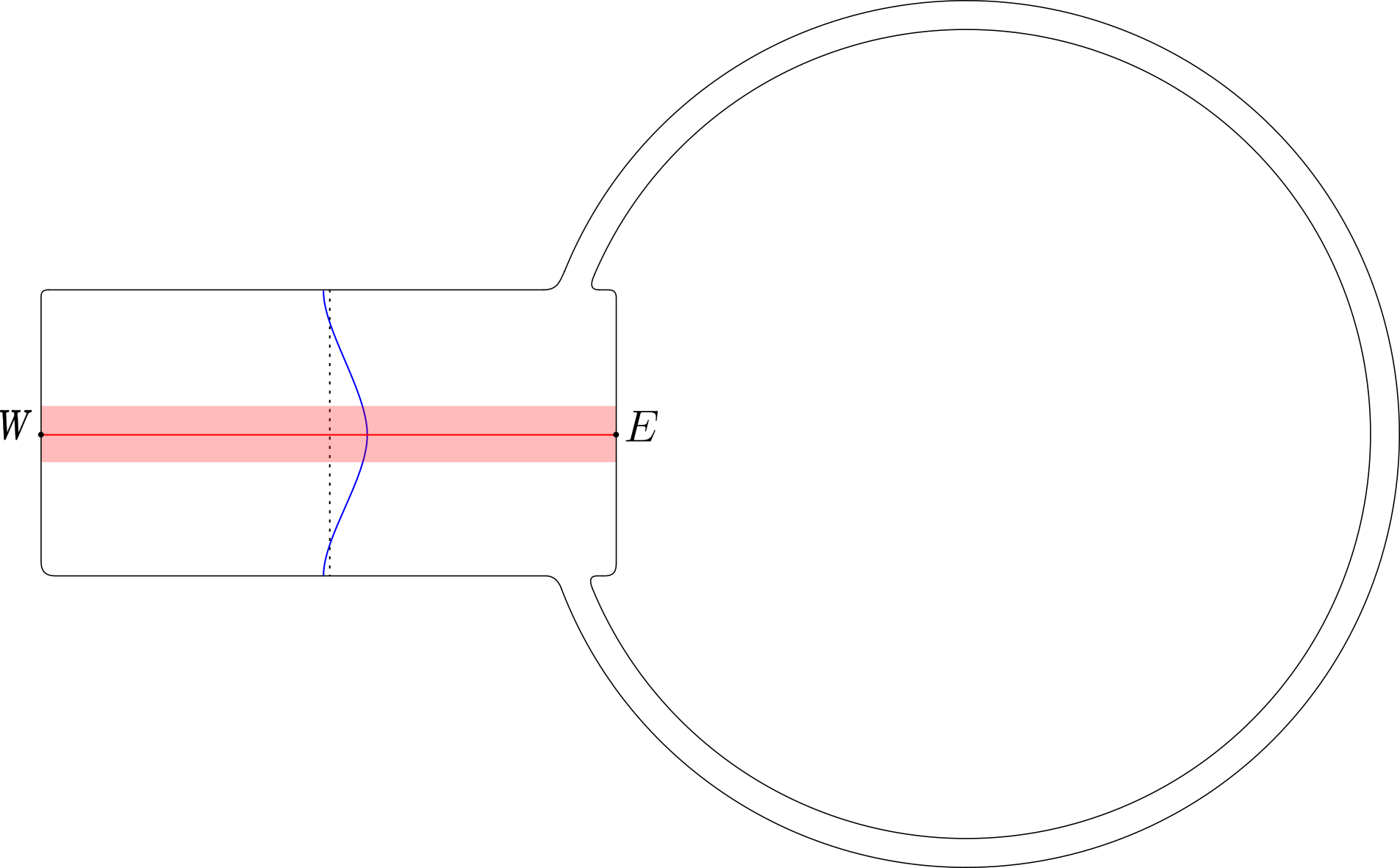}
\caption{$t_+$}
\end{subfigure}
\caption{The domain $\dbprm_{h,t}$ constructed in Lemma~\ref{lemme:famille à deux paramètres} at $t=t_-$ and $t=t_+$, for some $0<h<h_0$.
The segment $(W,E)$ is drawn in dark red, while its neighbourhood $\mathcal{U}$ is displayed in light red.}
\label{fig:famille_reguliere}
\end{figure}

We refer to \cite{daners} for an introduction to Mosco convergence, and only recall that this convergence guarantees that of Dirichlet eigenvalues and eigenfunctions (or more precisely of the spectral projectors). The first six properties above are satisfied by the family of domains described in the introduction. The last one, which is an ancillary 
smoothness property will be needed to prove that a second eigenfunction of the \fdsh\/ to be constructed, when pulled-back in $\dd$, is continuous in $t$ uniformly with respect to $x\in\overline{\dd}$ (see the appendix). To ensure this regularity property, we will round up the corners of the domains in the family (see Figure~\ref{fig:famille_reguliere}).

\begin{proof}
Following the construction proposed in the introduction, we begin with the horizontally elongated rectangle
$$
R=(-1,1)\times(-l,l),\qquad 0<l<1,
$$
and introduce the following smooth perturbation of $R$. Let $\Omega$ be the domain obtained from $R$ by rounding the corners, in such a way that $\Omega$ is convex, smooth, symmetric with respect to both axes, and coincides with $R$ except in some small neighbourhood of its corners. If the neighbourhood is small enough, then the second eigenvalue of $\Omega$ is still simple and the corresponding eigenfunction is odd with respect to the vertical axis. We also require that the neighbourhood is so small that the boundary of $\Omega$ still coincides with a horizontal segment $\Sigma_N:=(P_N,Q_N)$ near the point $N:=(0,l)$ and a horizontal segment $\Sigma_S:=(P_S,Q_S)$ near the point $S=(0,-l)$.

Then, consider a large circle of radius $r$ that contains $\Omega$ compactly, and enlarge the circle to obtain an open annulus $A_h$ of small positive width $h$. We also write $A_0=\emptyset$. When $h$ is small enough, say $0<h<h_0<l$, the annulus still encloses $\Omega$. Translate the annulus horizontally, in such a way that the two connected components of the boundary of the translated annulus intersect $\Sigma_N$ and $\Sigma_S$. There exists an open interval $T\subseteq\R_+$ such that the two boundary components of the translated annulus $A_h(t):=A_h+(t,0)$ intersect those two segments when $t\in T$. As a result of this construction, for all $t\in T$ the boundary of $A_h(t)\cup \Omega$ is made of two connected components. We call the boundary component containing $W$ the left boundary component, and that containing $E$ the right boundary component. We shall assume that $T$ is taken large enough and $h_0$ small enough so that for $t\in T$ and $0<h<h_0$, the closed line segment $[S,N]$, which is the nodal line of the eigenfunction on $\Omega$, intersects only the right boundary component when $t=t_-$, for some $t_-\in T$ close to $\inf T$, and only the left boundary component when $t=t_+$, for some $t_+\in T$ close to $\sup T$. Up to shrinking slightly $h_0$ we have that, in a neighbourhood of this segment, $\Omega$ coincides with $\Omega\cup A_h(t_-)$ and with $\Omega\cup A_h(t_+)$, for all $0\leq h< h_0$. Similarly, in a fixed neighbourhood of the segment $[W,E]$, $A_h(t)\cup \Omega$ coincides with $\Omega$, for all $t\in T$ and all $0\leq h<h_0$. Lastly, by standard results, e.g. \cite[Theorem~7.5]{daners}, $A_h(t)\cup \Omega$ is continuous with respect to $(h,t)$ in the sense of Mosco convergence, even when $h=0$. Indeed, the first condition in \cite[Theorem~7.5]{daners} follows from the fact that $A_h(t)\cup \Omega$ is continuous in the sense of inner compact sets, while the second one is a consequence of the continuity in the sense of characteristic functions and of the Faber-Krahn inequality (see \cite[chapter~2]{henrot-pierre} for an introduction to the different types of convergence for domains of $\R^n$). The last condition is also satisfied since $A_h(t)\cup \Omega$, being Lipschitz regular, is stable in the sense that $H_0^1(A_h(t)\cup \Omega)=H_0^1(\overline{A_h(t)\cup \Omega})$ (see \cite[p.~608]{daners}).

However, $A_h(t)\cup \Omega$ is not regular at the four intersection points between the boundaries of $A_h(t)$ and $\Omega$. Yet we shall replace the family $A_h(t)\cup \Omega$ by a family of smooth domains $\dbprm_{h,t}$ with $C^1$ dependence on $t$ in the following way. Let $p$ be one of the two irregular points of the left boundary component of $A_h(t)\cup \Omega$. Up to shrinking
slightly the interval $T$ (but not too much, to ensure $t_-,t_+\in T$) and the threshold $h_0$, we shall assume that the distance between $p$ and any of $P_N,P_S$ is not smaller than $h$. In this way, the intersection between the ball $B_{h/2}(p)$ and the boundary of $A_h(t)\cup \Omega$ is, up to reflections and translations the graph of the function
$$
f_h:(-r-h,-r)\ni x \mapsto
\begin{cases}
0 & \text{if }x< -r-h/2,\\
\sqrt{(r+h/2)^2-x^2} &\text{if }x\geq -r-h/2.
\end{cases}
$$
We then consider the function $g_{h}=f_{h}(1-\chi_h)$ where $\chi_h$ is a smooth cutoff function with support in $(-r-h,-r)$ and such that $\chi_h=1$ around $-r-h/2$. To construct $\dbprm_{h,t}$, we replace the function $f_{h}$ by the function $g_{h}$. Clearly we can ensure that the resulting $\dbprm_{h,t}$ is symmetric with respect to the $x$-axis. Note that with this construction the new boundary around $p$ remains invariant with respect to $t$, up to a translation. A similar procedure allows to smoothen the boundary near the two irregular points $p$ of the right boundary component of $A_h(t)\cup \Omega$, in the ball $B_{h/2}(p)$. The set $\dbprm_{h,t}$ obtained in this way is now smooth and still has two boundary components. Observe also that $\dbprm_{h,t}$ and $A(h)\cup \Omega$ differ only inside the ball $B_{h/2}(p)$. Hence we still have that $W$ and $E$ belong to distinct boundary components of $\dbprm_{h,t}$.
Furthermore, the segment in between these two points is contained in $\dbprm_{h,t}$, and $\dbprm_{h,t}$ always coincides with $\Omega$ in a fixed neighbourhood $\mathcal{U}$ of that segment. Lastly, for $t=t_-$ the vertical segment $[S,N]$ intersects only the right boundary component of $\dbprm_{h,t}$, up to taking a smaller value of $h_0$.
When $t=t_+$, it intersects only the left boundary component. In both cases, $\dbprm_{t_\pm,h}$ still coincides with $\Omega$ in a neighbourhood of that segment.

In terms of continuity, since $\dbprm_{h,t}$ is smooth it is stable, and by \cite[Theorem~7.5]{daners} once again, $(h,t)\mapsto\dbprm_{h,t}$ is continuous in the sense of Mosco convergence. To conclude we need to verify the last point of the lemma. To that end, we observe that apart from two vertical edges, the boundaries of $\dbprm_{h,t}$ are translations of each other as $t$ varies. Hence for a fixed $0<h<h_0$, the domains $\dbprm_{h,t}$ can be obtained from each other through a family of $C^\infty$-diffeomorphisms depending smoothly on $t$ in any small enough open subinterval $I$ of $T$. Indeed, for $t\in T$ take a transformation of the form $id+(s-t,0)\phi_t$ where $\phi_t:\R^2\to\R$ is supported in the annulus of $A_{3h}(t)$ and is equal to one in the annulus $A_{2h}(t)$. Note that this map just translates horizontally the points near the annulus and leaves invariant the other points. Whenever $s\in T$ is close enough to $t$, this transformation is a $C^\infty$-diffeomorphism depending smoothly on $s$, preserving the symmetry with respect to the $x$-axis, and mapping $\dbprm_{h,t}$ to $\dbprm_{h,s}$. Then, fixing $t_0\in I$, since $\dbprm_{h,t_0}$ is smooth and doubly-connected there exists a $C^\infty$-diffeomorphism mapping $\dd$ to $\dbprm_{h,t_0}$, thus composing this transformation with the previous $\dbprm_{t_0}\to\Omega_t$ for $t\in I$, we end up with a family of $C^\infty$-diffeomorphisms $\Phi_{h,t}:\dd\to\dbprm_{h,t}$ depending smoothly on $t\in I$.
\end{proof}

The Mosco convergence of the domains constructed in Lemma~\ref{lemme:famille à deux paramètres}, combined with the simplicity of the second eigenvalue over $\Omega$, allows to prove some continuity of the eigenfunctions with respect to $(h,t)$ jointly. This is crucial in order to argue that when the parameter $h$ takes on a small enough fixed value, the eigenfunction on $\dbprm_{h,t}$ still changes sign on the segment $(W,E)$ for all $t\in T$. This allows us to construct a one-parameter \fdsh, and hence to prove Proposition~\ref{prop:definition de la famille de domaines}.

\begin{proof}[Proof of Proposition~\ref{prop:definition de la famille de domaines}]
Consider the two-parameter family $\dbprm_{h,t}$ provided by Lemma~\ref{lemme:famille à deux paramètres} where $0\leq h<h_0$ and $t\in T_0$. We first prove that for some fixed small enough $h_1<h_0$, the second eigenvalue of $\dbprm_{h,t}$ is simple and the associated eigenfunctions change sign along the segment $(W,E)$, for all $t$ in an arbitrary compact subset of $T_0$, and for all $0\leq h<h_1$. By contradiction, otherwise we would have a sequence $t_n\to t$ for some $t\in T_0$ and a sequence $h_n\to0$, such that either the second eigenvalue is not simple or the associated eigenfunctions have fixed sign on $(W,E)$. But by Mosco continuity, when $n\to\infty$ the eigenvalues over $\dbprm_{h_n,t_n}$ converge to those
over $\dbprm_{0,t}=\Omega$, and the eigenprojectors converge strongly as operators of $L^2(\R^d)$ \cite[Theorem~4.4 and Corollary~4.2]{daners}. Since $\Omega$ has a simple second eigenvalue, this means that the second eigenvalue on $\dbprm_{h_n,t_n}$ is also simple for large enough $n$. Furthermore,  local elliptic estimates, e.g. \cite[Theorem 2.20]{gazzola-grunau-sweers}, show that the corresponding eigenfunctions converge not only in $L^2(\R^d)$, but also in any Sobolev space $H^k(\Omega\cap\mathcal{U})$, since $\partial\Omega$ is smooth, and in particular in $C(\overline{\Omega\cap\mathcal{U}})$ by Sobolev embeddings. Here, we used that $\dbprm_{h,t}\cap\mathcal{U}=\Omega\cap\mathcal{U}$. As a consequence, the limit eigenfunction has fixed sign along $(W,E)$, which is a contradiction. Actually, since the limit eigenfunction is odd with respect to the $y$-axis we shall even conclude that the eigenfunction on $\Theta_{h,t}$ changes sign an odd number of time along $(W,E)$ for $h<h_1$, which justifies Remark~\ref{remarque:sign le long du segment}.

The purpose of the next step is to prove that for $t=t_\pm$ and $h$ sufficiently small, the nodal line of the second eigenfunction in $\dbprm_{h,t_\pm}$ is localised in an arbitrary neighbourhood of the $y$-axis. In view of point~\ref{it:t+ et t-} in Lemma~\ref{lemme:E recouvrent T}, this will show that for $t=t_-$ the nodal line cannot touch the right boundary componen of $\dbprm_{h,t_-}$, while for $t=t_+$ it cannot touch the left boundary component of $\dbprm_{h,t_+}$. To prove this claim, let $\mathcal{V}$ be a neighbourhood of the $y$-axis, containing the nodal line of the second eigenfunction on $\Omega$. Note that we shall take a vertical strip as this neighbourhood (in blue in Figure~\ref{fig:famille_reguliere et voisinage V}), and this choice will make the following discussion clearer. As before, the second eigenvalue on $\dbprm_{h,t_\pm}$ converges to the second eigenvalue on $\Omega$ as $h\to0$, and the corresponding eigenfunctions $u_{h,\pm}$ converge to the second eigenfunction $u$ on $\Omega$ in $L^2(\R^d)$, and then in $C^1(\overline{\Omega\cap\mathcal{V}})$ by local elliptic estimates. To apply the elliptic estimates, we used that $\Omega\cap\mathcal{V}$ is a connected component of $\dbprm_{h,t}\cap\mathcal{V}$, by point~\ref{it:t+ et t-} in Lemma~\ref{lemme:famille à deux paramètres}. As explained below, this shows that if $h_2<h_1$ is small enough, for any $0<h<h_2$ the nodal line of $u_{h,\pm}$ must be trapped in $\mathcal{V}\cap\Omega$. Indeed, for any compact $K$ contained in $\Omega$ (in light green in Figure~\ref{fig:famille_reguliere_et_voisinage_V}), the eigenfunction $u$ does not vanish and it has a different sign on the left and the right vertical edges of $\partial\mathcal{V}\cap K$ (in dark red in Figure~\ref{fig:famille_reguliere_et_voisinage_V}). Therefore the same holds for $u_{h,\pm}$ for small enough $h$. Similarly, the normal derivative of $u$ does not vanish on the compact set $\partial\Omega\cap \partial\mathcal{V}$, hence the same holds for $u_{h,\pm}$ for small enough $h$. Overall, we get that $u_{h,\pm}$ does not vanish and has a different sign on the two vertical edges of $\partial\mathcal{V}\cap \Omega$ when $h<h_2$ with $h_2$ sufficiently small. As a consequence, the nodal line of $u_{h,\pm}$ must be contained in $\mathcal{V}\cap\overline{\Omega}$. Since $\partial\Omega\cap(\dbprm_{h,t_+}\setminus \Omega)$ lies in the right half-plane, this means that the impact points of the nodal line of $u_{h,+}$ are located on the left boundary component of $\dbprm_{h,t_+}$, for all $0<h<h_2$. Analogously, the impact points of nodal line of $u_{h,-}$ are located on the right boundary component of $\dbprm_{h,t_-}$, for $0<h<h_2$.

\begin{figure}[h]
\begin{subfigure}{.45\textwidth}
\includegraphics[height=.2\textheight]
{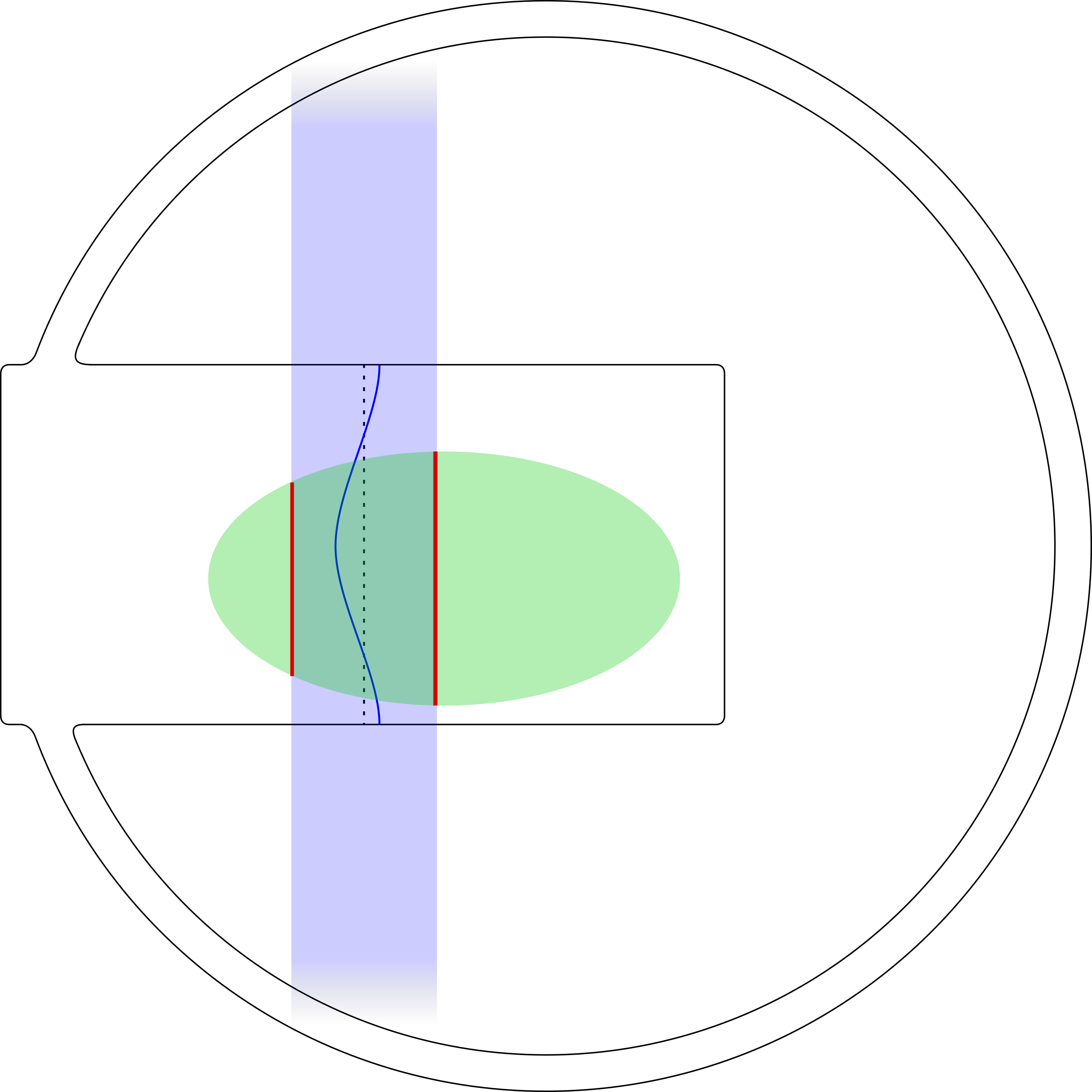}
\caption{$t_-$}
\end{subfigure}
\quad
\begin{subfigure}{.45\textwidth}
\includegraphics[height=.2\textheight]{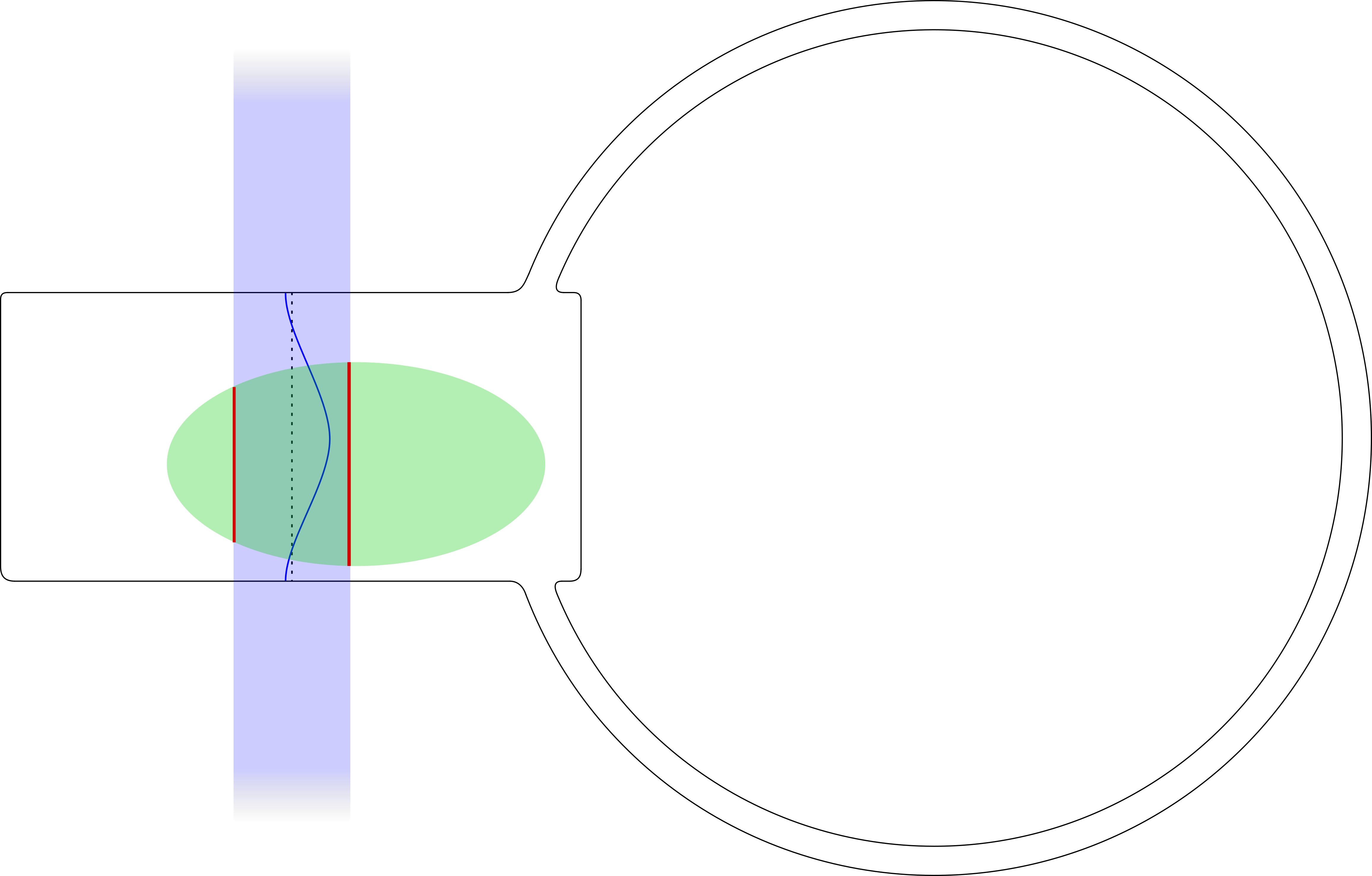}
\caption{$t_+$}
\end{subfigure}
\caption{The domain $\dbprm_{h,t}$ constructed in the proof of Proposition~\ref{prop:definition de la famille de domaines} at $t=t_-$ and $t=t_+$, for some $0<h<h_2$. The vertical strip in light blue represents the neighbourhood $\mathcal{V}$. The green region is an arbitrary compact set $K\subseteq\Omega$.}
\label{fig:famille_reguliere_et_voisinage_V}
\end{figure}

In what follows, we restrict to an open subinterval of $T$ which is compactly contained in $T_0$ but still contains $t_-$ and $t_+$. We then define the single parameter family of domains $\Omega_t:=\dbprm_{t,h_2/2}$, for all $t\in \overline{T}$. In this way, for all $t\in \overline{T}$, the second eigenvalue on $\Omega_{t}$ is simple and the corresponding eigenfunction, that we denote $u_t$, changes sign along the segment $(W,E)$ for all $t\in \overline{T}$. Furthermore, the impact points of the nodal line of $u_{t_+}$ lie on the left boundary component of $\Omega_{t_+}$, while that of $u_{t_-}$ lie on the right boundary component of $\Omega_{t_-}$. We also have the existence locally around each point of $\overline{T}$ of a family of $C^\infty$-diffeomorphisms $\Phi_t:\R^2\to\R^2$ preserving the symmetry with respect to the $x$-axis, mapping $\dd$ to $\Omega_t$, and having a $C^1$ dependence with respect to $t\in \overline{T}$. Up to a geometric inversion, we shall assume that $\Phi_t$ maps the outer boundary component of $\dd$ to the left boundary component of $\Omega_t$ and the inner boundary component of $\dd$ to the right boundary component of $\Omega_t$. Lemma~\ref{lemme:continuité fonction propres par rapport à t} in appendix, which is based on the implicit function Theorem, combined with Sobolev embeddings shows that the pulled back eigenfunctions $v_t:=u_t\circ\Phi_t$ are then continuous with respect to $t$ uniformly with respect to $x\in\overline{\dd}$. This holds locally around each point of $\overline{T}$, but covering this compact interval by a finite number of small enough open intervals and gluing the maps $t\mapsto\Phi_t$ obtained in each of them accordingly, we obtain the desired family of transformations $\Phi_t$ for all $t\in T$.
\end{proof}

%\input{parties/positivité}

%\input{parties/szego}

%\input{parties/torsion}

%% <== End of main content
%%%%%%%%%%%%%%%%%%%%%%%%%%%%%%%%%%%%%%%%%%%%%%%%%%%%%%%%%%%%%

%%%%%%%%%%%%%%%%%%%%%%%%%%%%%%%%%%%%%%%%%%%%%%%%%%%%%%%%%%%%%
%% APPENDICES
%%%%%%%%%%%%%%%%%%%%%%%%%%%%%%%%%%%%%%%%%%%%%%%%%%%%%%%%%%%%%
%\appendix
%% ==> Write your text here or include other files.
\begin{appendix}
\section{Continuity of eigenfunctions under domain transformation}

In this appendix, we prove a continuity result for Dirichlet eigenfunctions in regular spaces under domain transformation. As can be seen
in~\cite[section 4]{burenkov-lamberti-lanza_de_cristoforis} (see also~\cite[section 5.7]{henrot-pierre}), this can classically be
obtained thanks to the implicit function theorem. However, in most references the continuity is obtained in spaces with low regularity,
typically in $L^2(\R^2)$. Actually, being defined on different domains, one cannot expect to prove continuity in spaces more regular
than $W^{1,p}(\R^2)$ the eigenfunctions, since they are not more than Lipschitz over $\R^2$ in general. But as long as one pull them back in a fixed domain, one shall obtain much more regularity.

\begin{lemme}\label{lemme:continuité fonction propres par rapport à t}
Let $D$ be a $C^k-$regular bounded open set and $\Phi_t:\R^d\to\R^d$ be a family of $C^k$-diffeomorphisms, for $t$ in an open set $T$, such
that $(T\ni t\mapsto\Phi_t,\Phi_t^{-1}\in C^k(\R^d,\R^d)^2)$ is $C^1$. Define $\Omega_t:=\Phi_t(D)$ and assume that for $t_0\in T$, $\lambda$ is a simple eigenvalue
on $\Omega_{t_0}$. Then, for $t$ in some neighborhood of $t_0$, there exists a family of eigenvalues $\lambda_t$ on $\Omega_t$ and a family
of associated eigenfunctions $u_t$ such that $\lambda_{t_0}=\lambda$ and such that the following function is $C^1$ in the neighborhood of $t_0$:
$$
t\mapsto (\lambda_t,u_t\circ\Phi_t)\in \R\times H^k(D)
$$
\end{lemme}

\begin{proof}
We closely follow \cite[p.210--211]{henrot-pierre} and define the functional $\mathcal{F}:\R\times H_0^1\cap H^k(D)\times\R\to H^{k-2}(D)\times\R$ given by
$$
\mathcal{F}(t,v,\lambda)=\left(-\dv(A(t)\nabla v)-\lambda J(t)v; \int_Dv^2J(t)\right),
$$
where $J(t):=Jac\Phi(t)$ and $A(t):={D\Phi_t^{-1}}^TD\Phi_t^{-1}\circ\Phi_tJ(t)$. Note that $u$ is an $L^2$-normalized eigenfunction associated with $\lambda$ over $\Omega_t$ if and only if $\mathcal{F}(t,u\circ\Phi_t,\lambda)=(0,1)$. The functional $\mathcal{F}$ is well defined since $J(t)$ belongs to $W^{k-2,\infty}_{loc}$ and $A(t)$ to $W^{k-1,\infty}_{loc}$. Furthermore, it is $C^1$ since the dependence of $\Phi_t$ and $\Phi_t^{-1}$, together with their derivatives, is $C^1$ with respect to $t$. The differential at $(t_0,v_0,\lambda_0)$ with respect to $(v,\lambda)$ is
$$
D\mathcal{F}_{t_0,v_0,\lambda_0}(v,\lambda)=\left(-\dv(A(t_0)\nabla v)-\lambda_0J(t_0)v-\lambda J(t_0) v_0; 2\int_Dvv_0J(t_0)\right)
$$
We need to argue that when $\lambda_0=\lambda$ and $v_0=u_0\circ\Phi_{t_0}$, $u_0$ being an $L^2$ normalized eigenfunction associated with $\lambda$ on $\Omega_{t_0}$, the differential defines an isomorphism from $H_0^1\cap H^k(D)\times\R$ to $H^{k-2}(D)\times\R$. This amounts to prove that for any $f\in H^{k-2}(D)$ and any $c\in\R$, there exists a unique pair $\lambda\in\R$ and $v\in H_0^1\cap H^{k}(D)$ such that
$$
-\dv(A(t_0)\nabla v)-\lambda_0J(t_0)v-\lambda J(t_0) v_0=f,
$$
together with the constraint $\int_Dvv_0J(t_0)=c$. By \cite[Lemme~5.7.3]{henrot-pierre}, which is based on Fredholm's Theorem, there exists a unique solution $v\in H_0^1(D)$ satisfying the integral constraint. Furthermore, since $f+\lambda J(t_0)v_0\in H^{k-2}(D)$ and $A(t_0)\in C^{k-2}(\overline{D})$, by applying elliptic regularity results (e.g. \cite[Theorem~2.20]{gazzola-grunau-sweers}) in a recursive way we conclude that $v\in H^k(D)$, which concludes. As a consequence we can apply the implicit function Theorem to $\mathcal{F}$ which provides a $C^1$ path $t\mapsto (\lambda_t,v_t)\in\R\times H^k(D)$, defined in a neighborhood of $t_0$, where $\lambda_t$ is an eigenvalue over $\Omega_t$ and $v_t\circ\Phi_t^{-1}$ an associated eigenfunction. This proves the lemma.
\end{proof}
 %You need a file 'FileName.tex' for this.
\end{appendix}

%% Remerciements
\section*{Acknowledgements}
We would like to thank the {organisers} of the INDAM workshop \enquote{Matter or Shapes} held in Cortona in May 2025, and D.
Krej\v{c}i\v{r}\'{\i}k, whose talk during the session of open problems at the conference renewed our interest in Payne's nodal
line Conjecture. This work was partially supported by the Funda\c{c}\~ao para a Ci\^encia e a Tecnologia, Portugal, via the research centre
GFM, reference UID/00208/2025.

%%%%%%%%%%%%%%%%%%%%%%%%%%%%%%%%%%%%%%%%%%%%%%%%%%%%%%%%%%%%%
%% BIBLIOGRAPHY AND OTHER LISTS
%%%%%%%%%%%%%%%%%%%%%%%%%%%%%%%%%%%%%%%%%%%%%%%%%%%%%%%%%%%%%
%% A small distance to the other stuff in the table of contents (toc)
%\addtocontents{toc}{\protect\vspace*{\baselineskip}}

%% The Bibliography
%% ==> You need a file 'literature.bib' for this.
%% ==> You need to run BibTeX for this (Project | Properties... | Uses BibTeX)
%\addcontentsline{toc}{chapter}{Bibliographie} %'Bibliography' into toc
%\nocite{*} %Even non-cited BibTeX-Entries will be shown.
%\bibliographystyle{alpha} %Style of Bibliography: plain / apalike / amsalpha / ...
%\bibliography{biblio} %You need a file 'literature.bib' for this.

\iffalse

\fi

\printbibliography

@article {alessandrini,
    AUTHOR = {Alessandrini, Giovanni},
     TITLE = {Nodal lines of eigenfunctions of the fixed membrane problem in
              general convex domains},
   JOURNAL = {Comment. Math. Helv.},
  FJOURNAL = {Commentarii Mathematici Helvetici},
    VOLUME = {69},
      YEAR = {1994},
    NUMBER = {1},
     PAGES = {142--154},
      ISSN = {0010-2571,1420-8946},
   MRCLASS = {35P05 (35J05)},
  MRNUMBER = {1259610},
MRREVIEWER = {Gianfranco\ Bottaro},
       DOI = {10.1007/BF02564478},
       URL = {https://doi.org/10.1007/BF02564478},
}

@article{burenkov-lamberti-lanza_de_cristoforis,
 author = {Burenkov, V. I. and Lamberti, P. D. and Lanza de Cristoforis, M.},
 title = {Spectral stability of nonnegative self-adjoint operators},
 fjournal = {Journal of Mathematical Sciences (New York)},
 journal = {J. Math. Sci., New York},
 issn = {1072-3374},
 volume = {149},
 number = {4},
 pages = {1417--1452},
 year = {2008},
 language = {English},
 doi = {10.1007/s10958-008-0074-4},
 keywords = {35P05,47A10,47A55,47B25},
 zbMATH = {8020055},
 Zbl = {1558.35153}
}

@article {camilli,
    AUTHOR = {Camilli, F.},
     TITLE = {A note on convergence of level sets},
   JOURNAL = {Z. Anal. Anwendungen},
  FJOURNAL = {Zeitschrift f\"ur Analysis und ihre Anwendungen. Journal for
              Analysis and its Applications},
    VOLUME = {18},
      YEAR = {1999},
    NUMBER = {1},
     PAGES = {3--12},
      ISSN = {0232-2064,1661-4534},
   MRCLASS = {28A20 (46E35)},
  MRNUMBER = {1681839},
MRREVIEWER = {Aljo\v sa\ Vol\v ci\v c},
       DOI = {10.4171/ZAA/865},
       URL = {https://doi.org/10.4171/ZAA/865},
}

@article {dgh21,
    AUTHOR = {Dahne, Joel and G\'omez-Serrano, Javier and Hou, Kimberly},
     TITLE = {A counterexample to {P}ayne's nodal line conjecture with few
              holes},
   JOURNAL = {Commun. Nonlinear Sci. Numer. Simul.},
  FJOURNAL = {Communications in Nonlinear Science and Numerical Simulation},
    VOLUME = {103},
      YEAR = {2021},
     PAGES = {Paper No. 105957, 13},
      ISSN = {1007-5704,1878-7274},
   MRCLASS = {35P15},
  MRNUMBER = {4291475},
       DOI = {10.1016/j.cnsns.2021.105957},
       URL = {https://doi.org/10.1016/j.cnsns.2021.105957},
}

@article {d00,
    AUTHOR = {Damascelli, Lucio},
     TITLE = {On the nodal set of the second eigenfunction of the
              {L}aplacian in symmetric domains in {$\Bbb R^N$}},
   JOURNAL = {Atti Accad. Naz. Lincei Cl. Sci. Fis. Mat. Natur. Rend. Lincei
              (9) Mat. Appl.},
  FJOURNAL = {Atti della Accademia Nazionale dei Lincei. Classe di Scienze
              Fisiche, Matematiche e Naturali. Rendiconti Lincei. Serie IX.
              Matematica e Applicazioni},
    VOLUME = {11},
      YEAR = {2000},
    NUMBER = {3},
     PAGES = {175--181},
      ISSN = {1120-6330,1720-0768},
   MRCLASS = {35P05 (35A30)},
  MRNUMBER = {1841691},
MRREVIEWER = {I.\ Dan\ Coroian},
}

@article{daners,
 author = {Daners, Daniel},
 title = {Dirichlet problems on varying domains.},
 fjournal = {Journal of Differential Equations},
 journal = {J. Differ. Equations},
 issn = {0022-0396},
 volume = {188},
 number = {2},
 pages = {591--624},
 year = {2003},
 language = {English},
 doi = {10.1016/S0022-0396(02)00105-5},
 keywords = {35J25,35B25},
 zbMATH = {1902862},
 Zbl = {1090.35069}
}

@article {fournais01,
    AUTHOR = {Fournais, S\o ren},
     TITLE = {The nodal surface of the second eigenfunction of the
              {L}aplacian in {${\bf R}^D$} can be closed},
   JOURNAL = {J. Differential Equations},
  FJOURNAL = {Journal of Differential Equations},
    VOLUME = {173},
      YEAR = {2001},
    NUMBER = {1},
     PAGES = {145--159},
      ISSN = {0022-0396,1090-2732},
   MRCLASS = {35P05 (35J05)},
  MRNUMBER = {1836248},
MRREVIEWER = {Vitaly\ A.\ Volpert},
       DOI = {10.1006/jdeq.2000.3868},
       URL = {https://doi.org/10.1006/jdeq.2000.3868},
}

@article {Freitas02,
    AUTHOR = {Freitas, Pedro},
     TITLE = {Closed nodal lines and interior hot spots of the second
              eigenfunction of the {L}aplacian on surfaces},
   JOURNAL = {Indiana Univ. Math. J.},
  FJOURNAL = {Indiana University Mathematics Journal},
    VOLUME = {51},
      YEAR = {2002},
    NUMBER = {2},
     PAGES = {305--316},
      ISSN = {0022-2518,1943-5258},
   MRCLASS = {58J50 (35P05)},
  MRNUMBER = {1909291},
MRREVIEWER = {Vladimir\ Tulovsky},
       DOI = {10.1512/iumj.2002.51.2208},
       URL = {https://doi.org/10.1512/iumj.2002.51.2208},
}

@article {fk08,
    AUTHOR = {Freitas, Pedro and Krej\v ci\v r\'ik, David},
     TITLE = {Location of the nodal set for thin curved tubes},
   JOURNAL = {Indiana Univ. Math. J.},
  FJOURNAL = {Indiana University Mathematics Journal},
    VOLUME = {57},
      YEAR = {2008},
    NUMBER = {1},
     PAGES = {343--375},
      ISSN = {0022-2518,1943-5258},
   MRCLASS = {35J05 (35P15 47F05)},
  MRNUMBER = {2400260},
MRREVIEWER = {Valeri\ S.\ Serov},
       DOI = {10.1512/iumj.2008.57.3170},
       URL = {https://doi.org/10.1512/iumj.2008.57.3170},
}

@article {freitas-krejcirik07,
    AUTHOR = {Freitas, Pedro and Krej\v ci\v r\'ik, David},
     TITLE = {Unbounded planar domains whose second nodal line does not
              touch the boundary},
   JOURNAL = {Math. Res. Lett.},
  FJOURNAL = {Mathematical Research Letters},
    VOLUME = {14},
      YEAR = {2007},
    NUMBER = {1},
     PAGES = {107--111},
      ISSN = {1073-2780},
   MRCLASS = {35J05 (35B05)},
  MRNUMBER = {2289624},
       DOI = {10.4310/MRL.2007.v14.n1.a9},
       URL = {https://doi.org/10.4310/MRL.2007.v14.n1.a9},
}

@book{gazzola-grunau-sweers,
 author = {Gazzola, Filippo and Grunau, Hans-Christoph and Sweers, Guido},
 title = {Polyharmonic boundary value problems. {Positivity} preserving and nonlinear higher order elliptic equations in bounded domains},
 fseries = {Lecture Notes in Mathematics},
 series = {Lect. Notes Math.},
 issn = {0075-8434},
 volume = {1991},
 isbn = {978-3-642-12244-6; 978-3-642-12245-3},
 year = {2010},
 publisher = {Berlin: Springer},
 language = {English},
 doi = {10.1007/978-3-642-12245-3},
 keywords = {35-02,35J30,31B30,35J40,35J61,35J91,74B05},
 zbMATH = {5712793},
 Zbl = {1239.35002}
}

@misc{ha24,
	author = {Ha, Soo Sun},
	title = {On the nodal lines of eigenfunctions of {L}aplacian in plane},
	date = {2024},
	note = {Preprint at https://vixra.org/pdf/2312.0050v2.pdf}
}

@article {helffer-hoffmann-ostenhof-jauberteau-lena,
    AUTHOR = {Helffer, B. and Hoffmann-Ostenhof, T. and Jauberteau, F. and
              L\'ena, C.},
     TITLE = {On the multiplicity of the second eigenvalue of the
              {L}aplacian in non simply connected domains---with some
              numerics},
   JOURNAL = {Asymptot. Anal.},
  FJOURNAL = {Asymptotic Analysis},
    VOLUME = {121},
      YEAR = {2021},
    NUMBER = {1},
     PAGES = {35--57},
      ISSN = {0921-7134,1875-8576},
   MRCLASS = {35P05 (35J05)},
  MRNUMBER = {4183455},
MRREVIEWER = {Jolanta\ Przybycin},
       DOI = {10.3233/asy-191594},
       URL = {https://doi.org/10.3233/asy-191594},
}

@book{henrot-pierre,
 author = {Henrot, Antoine and Pierre, Michel},
 title = {Variation et optimisation de formes. {Une} analyse g{\'e}om{\'e}trique},
 fseries = {Math{\'e}matiques \& Applications (Berlin)},
 series = {Math. Appl. (Berl.)},
 issn = {1154-483X},
 volume = {48},
 isbn = {3-540-26211-3},
 year = {2005},
 publisher = {Berlin: Springer},
 language = {French},
 doi = {10.1007/3-540-37689-5},
 keywords = {49-02,74-02,49Q10,49Q05,49Q12,49J45,35R35},
 zbMATH = {2191966},
 Zbl = {1098.49001}
}

@article {H2ON,
    AUTHOR = {Hoffmann-Ostenhof, M. and Hoffmann-Ostenhof, T. and
              Nadirashvili, N.},
     TITLE = {The nodal line of the second eigenfunction of the {L}aplacian
              in {${\bf R}^2$} can be closed},
   JOURNAL = {Duke Math. J.},
  FJOURNAL = {Duke Mathematical Journal},
    VOLUME = {90},
      YEAR = {1997},
    NUMBER = {3},
     PAGES = {631--640},
      ISSN = {0012-7094,1547-7398},
   MRCLASS = {35P15 (35J05)},
  MRNUMBER = {1480548},
MRREVIEWER = {B.\ Hellwig},
       DOI = {10.1215/S0012-7094-97-09017-7},
       URL = {https://doi.org/10.1215/S0012-7094-97-09017-7},
}

@incollection {H2ON-2,
    AUTHOR = {Hoffmann-Ostenhof, M. and Hoffmann-Ostenhof, T. and
              Nadirashvili, N.},
     TITLE = {On the nodal line conjecture},
 BOOKTITLE = {Advances in differential equations and mathematical physics
              ({A}tlanta, {GA}, 1997)},
    SERIES = {Contemp. Math.},
    VOLUME = {217},
     PAGES = {33--48},
 PUBLISHER = {Amer. Math. Soc., Providence, RI},
      YEAR = {1998},
      ISBN = {0-8218-0861-3},
   MRCLASS = {35J25 (35B99 35P99)},
  MRNUMBER = {1605269},
MRREVIEWER = {Vitaly\ A.\ Volpert},
       DOI = {10.1090/conm/217/02980},
       URL = {https://doi.org/10.1090/conm/217/02980},
}

@incollection {jerison,
    AUTHOR = {Jerison, David},
     TITLE = {The first nodal set of a convex domain},
 BOOKTITLE = {Essays on {F}ourier analysis in honor of {E}lias {M}. {S}tein
              ({P}rinceton, {NJ}, 1991)},
    SERIES = {Princeton Math. Ser.},
    VOLUME = {42},
     PAGES = {225--249},
 PUBLISHER = {Princeton Univ. Press, Princeton, NJ},
      YEAR = {1995},
      ISBN = {0-691-08655-9},
   MRCLASS = {35P15 (35B05 35J20)},
  MRNUMBER = {1315550},
MRREVIEWER = {Fabio\ Cipriani},
}

@article {kennedy,
    AUTHOR = {Kennedy, J. B.},
     TITLE = {Closed nodal surfaces for simply connected domains in higher
              dimensions},
   JOURNAL = {Indiana Univ. Math. J.},
  FJOURNAL = {Indiana University Mathematics Journal},
    VOLUME = {62},
      YEAR = {2013},
    NUMBER = {3},
     PAGES = {785--798},
      ISSN = {0022-2518,1943-5258},
   MRCLASS = {35P05 (35J05 58J05)},
  MRNUMBER = {3164844},
MRREVIEWER = {Alexander\ A.\ Pankov},
       DOI = {10.1512/iumj.2013.62.4975},
       URL = {https://doi.org/10.1512/iumj.2013.62.4975},
}

@article {kiwan,
    AUTHOR = {Kiwan, Rola},
     TITLE = {On the nodal set of a second {D}irichlet eigenfunction in a
              doubly connected domain},
   JOURNAL = {Ann. Fac. Sci. Toulouse Math. (6)},
  FJOURNAL = {Annales de la Facult\'e{} des Sciences de Toulouse.
              Math\'ematiques. S\'erie 6},
    VOLUME = {27},
      YEAR = {2018},
    NUMBER = {4},
     PAGES = {863--873},
      ISSN = {0240-2963,2258-7519},
   MRCLASS = {35P15 (35J05 35P05 49R05)},
  MRNUMBER = {3884612},
MRREVIEWER = {Srinivasan\ Kesavan},
       DOI = {10.5802/afst.1585},
       URL = {https://doi.org/10.5802/afst.1585},
}

@article {LinNi,
    AUTHOR = {Lin, Chang Shou and Ni, Wei-Ming},
     TITLE = {A counterexample to the nodal domain conjecture and a related
              semilinear equation},
   JOURNAL = {Proc. Amer. Math. Soc.},
  FJOURNAL = {Proceedings of the American Mathematical Society},
    VOLUME = {102},
      YEAR = {1988},
    NUMBER = {2},
     PAGES = {271--277},
      ISSN = {0002-9939,1088-6826},
   MRCLASS = {35B05 (35J60)},
  MRNUMBER = {920985},
MRREVIEWER = {C.\ A.\ Swanson},
       DOI = {10.2307/2045874},
       URL = {https://doi.org/10.2307/2045874},
}

@article {linqun,
    AUTHOR = {Liqun, Zhang},
     TITLE = {On the multiplicity of the second eigenvalue of {L}aplacian in
              {${\bf R}^2$}},
   JOURNAL = {Comm. Anal. Geom.},
  FJOURNAL = {Communications in Analysis and Geometry},
    VOLUME = {3},
      YEAR = {1995},
    NUMBER = {1-2},
     PAGES = {273--296},
      ISSN = {1019-8385,1944-9992},
   MRCLASS = {35P05 (35J05)},
  MRNUMBER = {1362653},
       DOI = {10.4310/CAG.1995.v3.n2.a3},
       URL = {https://doi.org/10.4310/CAG.1995.v3.n2.a3},
}

@article {Melas,
    AUTHOR = {Melas, Antonios D.},
     TITLE = {On the nodal line of the second eigenfunction of the
              {L}aplacian in {${\bf R}^2$}},
   JOURNAL = {J. Differential Geom.},
  FJOURNAL = {Journal of Differential Geometry},
    VOLUME = {35},
      YEAR = {1992},
    NUMBER = {1},
     PAGES = {255--263},
      ISSN = {0022-040X,1945-743X},
   MRCLASS = {35P05 (35J05 58G25)},
  MRNUMBER = {1152231},
       URL = {http://projecteuclid.org/euclid.jdg/1214447811},
}

@article {mukherjee-saha,
    AUTHOR = {Mukherjee, Mayukh and Saha, Soumyajit},
     TITLE = {On the effects of small perturbation on low energy {L}aplace
              eigenfunctions},
   JOURNAL = {J. Spectr. Theory},
  FJOURNAL = {Journal of Spectral Theory},
    VOLUME = {15},
      YEAR = {2025},
    NUMBER = {3},
     PAGES = {1045--1087},
      ISSN = {1664-039X,1664-0403},
   MRCLASS = {58J50 (35J05)},
  MRNUMBER = {4949372},
       DOI = {10.4171/jst/570},
       URL = {https://doi.org/10.4171/jst/570},
}

@article {Payne,
    AUTHOR = {Payne, L. E.},
     TITLE = {Isoperimetric inequalities and their applications},
   JOURNAL = {SIAM Rev.},
  FJOURNAL = {SIAM Review. A Publication of the Society for Industrial and
              Applied Mathematics},
    VOLUME = {9},
      YEAR = {1967},
     PAGES = {453--488},
      ISSN = {0036-1445},
   MRCLASS = {52.40 (35.00)},
  MRNUMBER = {218975},
MRREVIEWER = {J.\ Hersch},
       DOI = {10.1137/1009070},
       URL = {https://doi.org/10.1137/1009070},
}

@article {Payne2,
    AUTHOR = {Payne, Lawrence E.},
     TITLE = {On two conjectures in the fixed membrane eigenvalue problem},
   JOURNAL = {Z. Angew. Math. Phys.},
  FJOURNAL = {Zeitschrift f\"ur Angewandte Mathematik und Physik. ZAMP.
              Journal of Applied Mathematics and Physics. Journal de
              Math\'ematiques et de Physique Appliqu\'ees},
    VOLUME = {24},
      YEAR = {1973},
     PAGES = {721--729},
      ISSN = {0044-2275,1420-9039},
   MRCLASS = {35P99},
  MRNUMBER = {333487},
MRREVIEWER = {J.\ Hersch},
       DOI = {10.1007/BF01597076},
       URL = {https://doi.org/10.1007/BF01597076},
}

@incollection {yau93,
    AUTHOR = {Yau, Shing-Tung},
     TITLE = {Open problems in geometry},
 BOOKTITLE = {Differential geometry: partial differential equations on
              manifolds ({L}os {A}ngeles, {CA}, 1990)},
    SERIES = {Proc. Sympos. Pure Math.},
    VOLUME = {54, Part 1},
     PAGES = {1--28},
 PUBLISHER = {Amer. Math. Soc., Providence, RI},
      YEAR = {1993},
      ISBN = {0-8218-1494-X},
   MRCLASS = {53-02},
  MRNUMBER = {1216573},
       DOI = {10.1090/pspum/054.1/1216573},
       URL = {https://doi.org/10.1090/pspum/054.1/1216573},
}
%\printbibliography[
%heading=bibintoc
%title={Références}
%]

%% The List of Figures
%\clearpage
%\addcontentsline{toc}{chapter}{List of Figures}
%\listoffigures

%% The List of Tables
%\clearpage
%\addcontentsline{toc}{chapter}{List of Tables}
%\listoftables

\end{document}